\newcommand{\Addresses}{{
		\bigskip
		\footnotesize
		
		\textsc{Department of Mathematics, Technion - Israel Institute of Technology, Haifa, Israel}\par\nopagebreak
		\textit{E-mail address:} \texttt{ofir.gor@technion.ac.il}
}}
\theoremstyle{plain}
\newtheorem*{thm*}{Theorem}
\newtheorem{thm}{Theorem}[section]
\newtheorem{lem}[thm]{Lemma}  
\newtheorem{proposition}[thm]{Proposition}
\newtheorem{cor}[thm]{Corollary}
\theoremstyle{remark}
\newtheorem{remark}{Remark}
\newcommand{\CC}{\mathbb{C}}
\newcommand{\PP}{\mathbb{P}}
\newcommand{\FF}{\mathbb{F}}
\newcommand{\RR}{\mathbb{R}}
\newcommand{\xii}{\widetilde{\xi}}
\newcommand{\uu}{\widetilde{u}}
\numberwithin{equation}{section}
\begin{document}
	\title{Smooth permutations and polynomials revisited}
	\author{Ofir Gorodetsky}
	\date{}
	\maketitle
	
\begin{abstract}
We study the counts of smooth permutations and smooth polynomials over finite fields. For both counts we prove an estimate with an error term that matches the error term found in the integer setting by de Bruijn more than 70 years ago. The main term is the usual Dickman $\rho$ function, but with its argument shifted. 

We determine the order of magnitude of $\log(p_{n,m}/\rho(n/m))$ where $p_{n,m}$ is the probability that a permutation on $n$ elements, chosen uniformly at random, is $m$-smooth.

We uncover a phase transition in the polynomial setting: the probability that a polynomial of degree $n$ in $\FF_q$ is $m$-smooth changes its behavior at $m\approx (3/2)\log_q n$. \end{abstract}
\section{Introduction}
A permutation is said to be $m$-smooth if it has no cycles of length greater than $m$. Let 
\[ p_{n,m} :=\PP_{\pi \in S_n} (\pi \text{ is }m\text{-smooth})\]
be the probability that a permutation from $S_n$ chosen uniformly at random is $m$-smooth. Let
\[ u := \frac{n}{m}\]
and let $\rho\colon [0,\infty) \to (0,\infty)$ be the Dickman function, defined via the delay differential equation $t\rho'(t)+\rho(t-1) =0$ for $t>1$ with initial conditions $\rho(t)=1$ for $t \le 1$. It is weakly decreasing, and de Bruijn proved that it satisfies
\begin{equation}\label{eq:db} \rho(u) = \exp\left( -u\log (u \log u)+ u + O\left(\frac{u \log \log u}{\log u}\right)\right)
\end{equation}
for $u \ge 3$  \cite{debruijn1951}. Goncharov \cite{goncharov1944} established a connection between $p_{n,m}$ and $\rho$, showing that $p_{n,m} \sim \rho(u)$ as $n \to \infty$ and $u = O(1)$.
In an impressive work, Manstavi\v{c}ius and Petuchovas \cite{Manstavicius2016} established asymptotic estimates for $p_{n,m}$ in the entire range $1 \le m \le n$, including the estimate \cite[Thm.~4]{Manstavicius2016}
\begin{equation}\label{eq:pnmboundweaker}
	p_{n,m} = \rho(u) \left(1+ O\left( \frac{u \log (u+1)}{m}\right)\right)
\end{equation}
which holds for $n \ge m \ge \sqrt{n \log n}$. Recently, Ford \cite[Thm.~1.17]{Ford2021} proved that 
\begin{equation}\label{eq:ford}
	\rho\left( \frac{n}{m}\right) \le p_{n,m} \le \rho\left(\frac{n+1}{m+1}\right)
\end{equation}
holds uniformly for $n \ge m \ge 1$. This almost immediately leads to
\begin{equation}\label{eq:pnmbound}
	p_{n,m} = \rho(u) \exp\left(O\left( \frac{u \log (u+1)}{m}\right)\right)
\end{equation}
holding in $n \ge m \ge 1$ \cite[Prop.~1.8]{Gorodetsky2022}, extending \eqref{eq:pnmboundweaker}. Theorems \ref{thm:between}--\ref{thm:mediumm} below, whose proofs borrow heavily from \cite{Manstavicius2016}, improve on \eqref{eq:pnmbound}.
\begin{thm}\label{thm:between}
Uniformly for $n \ge m \ge \sqrt{n \log n}$ we have
\begin{equation}\label{eq:rholarge2}
	p_{n,m} = \rho\left(\frac{n}{m+\frac{1}{2}}\right)\left(1+O\left(\frac{\log(u+1)}{m}\right)\right).
\end{equation}
	Uniformly for $\sqrt{n\log n} \ge m \ge 1$ we have
\[
		p_{n,m} =  \rho\left(\frac{n}{m+\frac{1}{2}}\right) \exp\left(O\left(\frac{u\log^2(u+1)}{m^2} \right)\right).\]
\end{thm}
We may combine both parts of Theorem~\ref{thm:between} as
\[ 
p_{n,m} =  \rho\left(\frac{n}{m+\frac{1}{2}}\right) \exp\left(O\left(\frac{\log(u+1)}{m}+\frac{u\log^2(u+1)}{m^2}  \right)\right)\]
uniformly for $n \ge m \ge 1$. We see $p_{n,m} \sim \rho\left(\frac{n}{m+\frac{1}{2}}\right)$ holds
when  $m/(n^{1/3}(\log n)^{2/3})\to \infty$.

The error term $O(\log(u+1)/m)$ in \eqref{eq:rholarge2} is of particular importance, as we now explain. We say that a positive integer is $y$-smooth if all its prime factors are at most $y$, and we denote \[\Psi(x,y) := \#\{ 1\le n \le x: n \text{ is }y\text{-smooth}\}.\] Setting $n':=\log x$, $m':=\log y$ and $u':=n'/m'$, de Bruijn \cite{debruijn19512} showed that, in the range $n' \ge m' \ge n'^{5/8+\varepsilon}$, the estimate \[ \frac{\Psi(x,y)}{x} = \rho(u')\left(1+O_{\varepsilon}\left(\frac{\log (u'+1)}{m'}\right)\right)\] 
holds. This error term, $O_{\varepsilon}\left(\frac{\log (u'+1)}{m'}\right)$, is analogous to \eqref{eq:rholarge2}.
\begin{remark}
The main term $\rho\left(\frac{n}{m+\frac{1}{2}}\right)$ of Theorem~\ref{thm:between} arises indirectly. We first prove an estimate with a more complicated main term, see Proposition~\ref{prop:between}. This term originally appeared in \cite[Cors.~3, 5]{Manstavicius2016}, where it serves as an approximation to $p_{n,m}$ in a narrow range and with a worse error term compared to Theorem~\ref{thm:between}. In Lemma~\ref{lem:shif} we simplify the complicated main term and show it may be replaced by $\rho\left(\frac{n}{m+\frac{1}{2}}\right)$ at a negligible cost.
\end{remark}
\begin{remark}
Ford's proof of \eqref{eq:ford} avoids complex analysis and it will be interesting to have a similar argument estimating $p_{n,m}$ relative to $\rho\left(\frac{n}{m+\frac{1}{2}}\right)$.
\end{remark}
Theorem \ref{thm:between} is weaker than \eqref{eq:pnmbound} when $m=o(\log n)$. We complement it with
\begin{thm}\label{thm:mediumm}
If $2\le m\le n$ satisfies $m=O(\log n)$ then
\[		p_{n,m} =\left( \frac{e}{n}\right)^u \exp\bigg( u \bigg(-\log\big(1-n^{-\frac{1}{m}}\big)+O\bigg( \frac{n^{-\frac{1}{m}}}{m} \bigg) \bigg)\bigg).\]
\end{thm}
The proof of Theorem \ref{thm:mediumm} relies on estimates from \cite{Manstavicius2016}. Theorem~\ref{thm:mediumm} is only claimed for $m \ge 2$, as it gives a wrong estimate if applied with $m=1$. By Stirling's approximation, $p_{n,1}=1/n! \asymp (e/n)^n n^{-1/2}$ if $m=1$. 

From \eqref{eq:pnmbound} and Theorem~\ref{thm:mediumm} we see that $\log p_{n,m} \sim \log \rho(u)$ holds as $n\to \infty$.  Using Theorems \ref{thm:between}--\ref{thm:mediumm} we determine the order of magnitude of  $\log(p_{n,m}/\rho(u))$:
\begin{cor}\label{cor:lower}
	Define $A$ via $p_{n,m}=\rho(u)\exp(A)$. Uniformly for $n \ge m \ge 1$,
	\[ A \asymp  u \log \left(1 + \frac{\log u}{m}\right).\]
\end{cor}
\subsection{Results for smooth polynomials}\label{sec:polybg}
A polynomial over a finite field $\FF_q$ is said to be $m$-smooth if all its irreducible factors have degrees at most $m$. Let $\mathcal{M}_{n,q}$ be the set of monic polynomials of degree $n$ over $\FF_q$. We write $p_{n,m,q}$ for the probability that a polynomial from $\mathcal{M}_{n,q}$ chosen uniformly at random is $m$-smooth. Manstavi\v{c}ius \cite[Thm.~2]{Manstavicius19922} proved that
\[p_{n,m,q} = \rho(u) \left( 1 +O\left(\frac{u\log(u+1)}{m}\right)\right)\]
holds in range $n\ge m \ge \sqrt{n\log n}$.
Recently, the author proved that for fixed $\varepsilon>0$, the ratio $p_{n,m,q}/p_{n,m}$ satisfies 
\begin{equation}\label{eq:combined}
	\frac{p_{n,m,q}}{p_{n,m}} =1+ O_{\varepsilon}\left(\frac{u n^{\frac{1+\mathbf{1}_{2 \mid m}}{m}} \min\{ m ,\log(u+1)\}}{m q^{\lceil \frac{m+1}{2}\rceil}}\right)
\end{equation}
for $n \ge m \ge (2+\varepsilon)\log_q n$ \cite[Thms.~1, 2]{Gorodetsky2022}. The implied constant does not depend on $q$. From \eqref{eq:combined} and Theorem~\ref{thm:between} we immediately obtain
\begin{cor}
Uniformly for $n\ge m\ge \sqrt{n\log n}$ we have  
\[p_{n,m,q}=\rho\left( \frac{n}{m+\frac{1}{2}} \right) \left(1+O\left( \frac{\log(u+1)}{m}\right)\right).\]
Fix $\varepsilon>0$. Uniformly for $\sqrt{n\log n}\ge m \ge (2+\varepsilon)\log_q n $ we have \[p_{n,m,q} = \rho\left(\frac{n}{m+\frac{1}{2}}\right) \exp\left( O_{\varepsilon}\left( \frac{u\log^2(u+1)}{m^2} \right)\right).\]
\end{cor}
The rest of our results concern  $p_{n,m,q}/p_{n,m}$. To obtain estimates with better range and error than \eqref{eq:combined} we introduce a function $G_q(z)$. The generating function of $\{p_{n,m,q}\}_{n \ge 0}$ is
\begin{equation}\label{eq:defFq}
	F_q(z):=\prod_{\substack{P \in \mathcal{P} \\ \deg(P) \le m}} \bigg(1- \bigg(\frac{z}{q}\bigg)^{\deg (P)}\bigg)^{-1}
\end{equation}
where $\mathcal{P}=\mathcal{P}_{q}$ is the set of monic irreducible polynomials over $\FF_q$. It is analytic in $|z|<q$. The generating function of $\{p_{n,m}\}_{n \ge 0}$ is the entire function \cite{Manstavicius2016}
\begin{equation}\label{eq:Fdef} F(z):=\exp\left(\sum_{i=1}^{m} \frac{z^i}{i}\right).
\end{equation}
For $|z|<q$ we define
\begin{equation}\label{eq:Gqdeforig}
G_q(z) := \frac{F_q(z)}{F(z)}.
\end{equation}
The function $G_q$ is studied in \cite[Lem.~2.1]{Gorodetsky2022}. By \cite[Thm.~1.3]{Gorodetsky2022} we have 
\begin{equation}\label{eq:main mediumn32}
	\frac{p_{n,m,q}}{p_{n,m}} =  G_q(x) \bigg(1+ O_{\varepsilon}\bigg(\frac{n^{\frac{1+\mathbf{1}_{2 \mid m}}{m}} \min\{m, \log (u+1)\}}{mq^{\lceil \frac{m+1}{2}\rceil}}  \bigg)\bigg)
\end{equation}
in the range $n/(\log n \log^3 \log(n+1)) \ge m \ge (2+\varepsilon)\log_q n$, where $x=x_{n,m}$ is the unique positive solution to
\begin{equation}\label{eq:saddledef2}
	\sum_{i=1}^{m} x^i = n.
\end{equation}
In that range, $G_q(x)$ is very close to $1$, see  \cite[Lems.~5.3-5.4]{Gorodetsky2022}.
We complement \eqref{eq:main mediumn32} with a result for large $m$. We define $\xi(u)\ge 0$ by $e^{\xi(u)} = 1+u\xi(u)$.
\begin{thm}\label{thm:smallupnmq}
If $n \ge m\ge \sqrt{10n \log n}$ then
\[ \frac{p_{n,m,q}}{p_{n,m}} = G_q(e^{\xi(u)/m}) \left(1 + O \left( \frac{\log(u+1)}{m q^{\lceil \frac{m+1}{2}\rceil}}\right)\right).\]
\end{thm}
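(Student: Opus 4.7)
The plan is to follow the structure of the proof of \eqref{eq:rholarge}, applied now to $p_{n,m,q}/p_{n,m}$ in place of $p_{n,m}$. Starting from the factorisation $\prod_{P,\deg P\le m}(1-(z/q)^{\deg P})^{-1}=B(z)G_q(z)$ with $B(z):=\exp(\sum_{i=1}^m z^i/i)$, one obtains the convolution identity $p_{n,m,q}=\sum_{k\ge 0}a_k\,p_{n-k,m}$, where $a_k:=[z^k]G_q(z)$. A key observation is that $a_k=0$ for $1\le k\le m$: since every polynomial of degree $\le m$ is automatically $m$-friable, $\sum_jp_{j,m,q}z^j=1/(1-z)+O(z^{m+1})$, while $\exp(-\sum_{i=1}^m z^i/i)=(1-z)\exp(\sum_{i>m}z^i/i)=(1-z)+O(z^{m+1})$, and multiplying these yields $G_q(z)=1+O(z^{m+1})$. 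Recognising $G_q(y)=\sum_{k\ge 0}a_ky^k$, this gives the exact identity
\[
\frac{p_{n,m,q}}{p_{n,m}}-G_q(y)=\sum_{k\ge m+1}a_k\left(\frac{p_{n-k,m}}{p_{n,m}}-y^k\right),\qquad y:=e^{\xi(u)/m}.
\]

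Two ingredients control the right-hand side. The first is a ratio estimate: using \eqref{eq:rholarge} for both $p_{n-k,m}$ and $p_{n,m}$ together with $(\log\rho)'(u)\approx-\xi(u)$ and $\xi'(u)\asymp 1/u$ for $u\ge 1$ (consequences of the saddle-point/Laplace-transform relation for $\rho$), Taylor expansion in $k/m$ yields $|p_{n-k,m}/p_{n,m}-y^k|\ll y^k\bigl(\log(u+1)/m+(k/m)^2/u\bigr)$ for $k=O(m)$; in particular, at $k\approx m+1$ this simplifies (since $y^{m+1}\asymp 1+u\xi$) to $|p_{n-k,m}/p_{n,m}-y^k|\ll y^{m+1}/u\asymp\log(u+1)$. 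The second is a coefficient bound: from the alternative representation $G_q(z)=(\sum_jp_{j,m,q}z^j)(1-z)\exp(\sum_{i>m}z^i/i)$ and the Weil bound $|dN_d-q^d|\ll q^{d/2}$, a direct computation yields $a_{m+1}=p_{m+1,m,q}-1+1/(m+1)=O(q^{-\lceil(m+1)/2\rceil}/m)$, and similar Weil-type arguments bound the higher $a_k$. Multiplying and summing, the leading contribution is $O(\log(u+1)/(mq^{\lceil(m+1)/2\rceil}))$, matching the claim.

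The main obstacle is controlling the tail of the sum over large $k$ uniformly: for $k$ much larger than $m$ the ratio estimate $p_{n-k,m}/p_{n,m}\approx y^k$ degrades, and one must combine cruder monotonicity-based bounds with sharper Weil-based decay estimates on $|a_k|$ in powers of $q^{-1/2}$. The constant $10$ in $\sqrt{10n\log n}$ should arise from ensuring that $u/m\ll 1/\log n$ throughout the range, so that the relative losses of size $\log(u+1)/m$ from the ratio estimate remain dominated by the $q^{-\lceil(m+1)/2\rceil}$ savings coming from the coefficients $a_k$.
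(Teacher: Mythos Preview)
Your approach is genuinely different from the paper's. The paper works complex-analytically: it writes $p_{n,m,q}-G_q(e^{\xi/m})p_{n,m}$ as a single contour integral via \eqref{eq:pnmcauchy} and \eqref{eq:pnmqcauchy}, splits the line at $|\Im s|=1+u\xi$, handles the near part by Taylor-expanding $G_q(e^{-s/m})/G_q(e^{\xi/m})-1$ at $s=-\xi$ and invoking Lemmas~\ref{lem:abso} and \ref{lem:rholin}, and treats the far part with the third case of Lemma~\ref{lem:i bounds} together with a Perron-type estimate. Your convolution identity is correct (with the convention $p_{j,m}=0$ for $j<0$), as is the vanishing $a_k=0$ for $1\le k\le m$; this route is more elementary and makes the source of the $q^{-\lceil(m+1)/2\rceil}$ saving transparent, but it forces you to control the tail in $k$ by hand, whereas the paper's contour packages the entire sum into a single analytic factor.

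The gap is precisely the tail you flag, and your sketch does not resolve it. For $m+1\le k\le 2m+1$ one has $a_k=a_k^{\mathrm{log}}/k\ll q^{-\lceil k/2\rceil}/k$ (the exponential contributes no cross terms below $z^{2m+2}$), and your ratio estimate indeed gives $\ll\log(u+1)/(mq^{\lceil(m+1)/2\rceil})$ from these terms. For $k>2m$ two further ingredients are needed. First, the \emph{uniform} bound $p_{n-k,m}/p_{n,m}\ll u^{O(1)}y^k$ for all $0\le k\le n$: this follows from \eqref{eq:pnmbound} and \eqref{eq:LaBrTe} together with $\int_{(n-k)/m}^{u}\xi(t)\,dt\le\xi(u)\,k/m$ (monotonicity of $\xi$), the cases $n-k<m$ being absorbed into the $u^{O(1)}$ since $e^{\xi}\asymp u\log u$. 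Second, $\sum_{k>2m}a_ky^k\ll(u\log u)^2/(mq^m)$, obtained by writing $G_q(y)=1+L(y)+O(L(y)^2)$ with $L(y)=\sum_{i>m}a_i^{\mathrm{log}}y^i/i$ and using $a_i^{\mathrm{log}}\ll q^{-\lceil i/2\rceil}$. These combine to give a tail $\ll u^{O(1)}/(mq^m)\ll\log(u+1)/(mq^{\lceil(m+1)/2\rceil})$, but none of this is in your write-up; ``cruder monotonicity-based bounds'' is not a proof.

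Your account of the constant $10$ is also off. Its role (see Lemma~\ref{lem:auxg}) is to force $e^\xi=1+u\xi\le 1+2u\log u<2^{m/3}$ via Lemma~\ref{lem:uniformxi}, hence $y=e^{\xi/m}<2^{1/3}<\sqrt q$. This is exactly what makes $a_k^{\mathrm{log}}y^k$ decay geometrically and what makes the tail bound above succeed. The comparison you propose between $\log(u+1)/m$ and $q^{-\lceil(m+1)/2\rceil}$ is not the operative constraint.
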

Using Theorem~\ref{thm:smallupnmq} we are able to drop the condition $n/(\log n\log^3 \log(n+1)) \ge m$ present in \eqref{eq:main mediumn32}:
\begin{cor}\label{cor:rangelarger}
The estimate \eqref{eq:main mediumn32} holds uniformly for $n \ge m \ge (2+\varepsilon)\log_q n$.
\end{cor}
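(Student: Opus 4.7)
The plan is to extend \eqref{eq:main mediumn32} from the preexisting range, which ends at $m \le n/(\log n\log^3\log(n+1))$, to cover all $m \le n$. First I would note that Theorem~\ref{thm:smallupnmq} applies whenever $m \ge \sqrt{10 n \log n}$, and for $n$ sufficiently large the inequality $n/(\log n\log^3\log(n+1)) > \sqrt{10 n\log n}$ is immediate, so the union of the two ranges already covers $(2+\varepsilon)\log_q n \le m \le n$; bounded $n$ can be absorbed into the implied constant. Hence it suffices to establish \eqref{eq:main mediumn32} in the subrange $m \ge \sqrt{10 n \log n}$.

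In this subrange, Theorem~\ref{thm:smallupnmq} gives $p_{n,m,q}/p_{n,m} = G_q(e^{\xi/m})(1+O(\log(u+1)/(m q^{\lceil (m+1)/2\rceil})))$. The key step is to replace $G_q(e^{\xi/m})$ by $G_q(x)$ using the comparison $G_q(x)/G_q(e^{\xi/m}) = 1 + O(u\log^2(u+1)/(m^2 q^{\lceil (m+1)/2\rceil}))$ asserted in the text preceding the corollary (equation \eqref{eq:GqxGqxi}). This yields a combined multiplicative error of $O(\log(u+1)(1 + u\log(u+1)/m)/(m q^{\lceil (m+1)/2\rceil}))$, which collapses to $O(\log(u+1)/(m q^{\lceil (m+1)/2\rceil}))$ once one verifies that $u\log(u+1)/m = O(1)$ throughout $m\ge\sqrt{10 n\log n}$ (routine, since $u \le \sqrt{n/(10\log n)}$ there).

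The remainder is bookkeeping: $G_q(x) = 1 + o(1)$ in this subrange (since $m - 2\log_q n \to \infty$, per Theorem~1.4 of \cite{Gorodetsky2022} as paraphrased after \eqref{eq:main mediumn3}), so the multiplicative form $G_q(x)(1+O(\cdot))$ converts to the additive form $G_q(x) + O(\cdot)$ appearing in \eqref{eq:main mediumn32}. The resulting error $O(\log(u+1)/(m q^{\lceil (m+1)/2\rceil}))$ matches the claimed shape in \eqref{eq:main mediumn32} because $n^{(1+\parity)/m}$ is bounded once $m \ge (2+\varepsilon)\log_q n$ and $\min\{m,\log(u+1)\} = \log(u+1)$ once $m \ge \sqrt{10n \log n}$. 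I do not anticipate a substantive obstacle: the proof is essentially the pairing of Theorem~\ref{thm:smallupnmq} with \eqref{eq:GqxGqxi}, the rest being elementary comparisons of $u$, $m$, $\log(u+1)$, and $n^{1/m}$.
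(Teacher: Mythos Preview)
Your proposal is correct and follows exactly the route the paper takes: combine Theorem~\ref{thm:smallupnmq} with \eqref{eq:GqxGqxi} to cover the range $m \ge \sqrt{10n\log n}$, and note that this range overlaps the preexisting range of \eqref{eq:main mediumn32}. The paper compresses all of this into the single sentence preceding the corollary, and your expanded bookkeeping (the bound $u\log(u+1)/m = O(1)$, the boundedness of $G_q(x)$ and $n^{(1+\parity)/m}$, and the identification $\min\{m,\log(u+1)\}=\log(u+1)$) is exactly what ``follows immediately'' unpacks to.
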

We turn to smaller $m$. In the range $(2-\varepsilon)\log_q n \ge m \ge (1+\varepsilon)\log_q n$,
\[ \log G_q(x) \asymp_{q,\varepsilon} \frac{n^2}{mq^m},\]
see \cite[Lem.~5.4]{Gorodetsky2022}. In \cite[Thm.~1.3]{Gorodetsky2022} it is shown that $p_{n,m,q}/p_{n,m} \sim G_q(x)$ holds as $n \to \infty$ as long as $m \ge (3/2+\varepsilon)\log_q n$. Our next result shows that $p_{n,m,q}/p_{n,m}$ experiences a transition when $m$ is close to $(3/2)\log_q n$.
\begin{thm}\label{thm:trans}
Fix a prime power $q$ and $\varepsilon>0$. There is a quantity $A>0$ of order
\[ A \asymp_{q,\varepsilon} \frac{n^3}{mq^{2m}}\]
such that, for $(2-\varepsilon)\log_q n \ge m \ge (1+\varepsilon)\log_q n$ we have, as $n \to \infty$,
\[\frac{p_{n,m,q}}{p_{n,m}} \sim G_q(x)\exp( -A).\]
\end{thm}
In particular, if $q^m=n^{\tau}$ where $\tau\in(1,2)$ then $A\asymp_{q,\varepsilon} \frac{n^{3-2\tau}}{\log n}$ and so $A\gg_{q,\varepsilon}1$ if $q^m =O( n^{3/2}/(\log n)^{1/2})$.
\subsection*{Conventions}
The letters $C,c$ shall denote absolute positive constants which may change between different instances. The notation $A \ll B$ means $|A| \le C B$ for some absolute constant $C$, while $A\ll_{a,b,\ldots} B$ means $C$ may depend on the parameters in the subscript. We write $A\asymp_{a,b,\ldots}B$ to indicate $cB \le A \le C B$ holds for $C,c>0$ that may depend on $a,b,\ldots$. 
\section{Auxiliary estimates and functions}
\subsection{Saddle point review}
By Theorem 2 and Corollary 5 of \cite{Manstavicius2016}, uniformly for $n \ge m \ge 1$ we have
\begin{equation}\label{eq:saddle}
	p_{n,m}= \frac{\exp\left( \sum_{i=1}^{m} \frac{x^i}{i}\right)}{x^n \sqrt{2\pi \lambda}} \left(1+O\left( u^{-1}\right)\right)
\end{equation}
where $x=x_{n,m}$ is the saddle point defined as the unique positive solution to \eqref{eq:saddledef2}, and $\lambda=\lambda_{n,m}$ is defined as
\begin{equation}\label{eq:lambedadef}
	\lambda = \sum_{i=1}^{m} ix^i.
\end{equation}
\begin{lem}\label{lem:xlam}
Uniformly for $n \ge m \ge 1$, the following estimates hold: $n^{1/m} \ge x \gg n^{1/m}$, $nm \ge \lambda \gg nm$ and $\lambda= nm( 1+ O(1/\max\{ \log u,n^{1/m}\}))$. 
\end{lem}
\begin{proof}
We study $x$. Considering only the $i=m$ term in \eqref{eq:saddledef2} gives $x \le n^{1/m}$. If $x<1$ then $\sum_{i=1}^m x^i < m \le n$, a contradiction, hence $x \ge 1$. Consequently, by the definition of $x$, $mx^m \ge n$ holds, which implies
\[	x \ge  n^{\frac{1}{m}} m^{-\frac{1}{m}} \ge
 n^{\frac{1}{m}}e^{-\frac{1}{e}}.\]
	We turn to $\lambda$. We have $\lambda  \le m \sum_{i=1}^{m} x^i = nm$. By \cite[Lem.~9]{Manstavicius2016}, \begin{equation}\label{eq:lem9}
		\lambda = nm(1+O(1/\log u))
	\end{equation}
	uniformly for $u >1$. We also have $x^m \le n=\sum_{i=1}^{m} x^i < x^m/(1-x^{-1})$ and so
\[	mn(1-x^{-1}) < mx^m \le \sum_{i=1}^{m}ix^i = \lambda \le nm \]
	implying $\lambda=nm(1+O(x^{-1}))=nm(1+O(n^{-1/m}))$. The estimate \eqref{eq:lem9} already shows $\lambda  \gg nm$ when $u \ge C$. If $u < C$ then, since $x \ge 1$, we find $\lambda \ge \sum_{i=1}^{m} i \gg nm$.
\end{proof}
\subsection{Dickman function review}
We define $\xi \colon [1,\infty) \to [0,\infty)$, a function of variable $u\ge 1$, by	\[e^{\xi(u)} = 1+u\xi(u).\]
\begin{lem}\cite[Lem.~1]{hildebrand1984}\label{lem:xi size}
	We have $\xi \sim \log u$ as $u \to \infty$, and $\xi'=u^{-1}(1+O(1/\log u))$ for $u \ge 2$.
\end{lem}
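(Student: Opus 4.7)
The plan is to work entirely from the implicit equation $e^\xi = 1+u\xi$. First, I would verify that this equation determines $\xi>0$ uniquely for each $u>1$ (the function $t\mapsto e^t - 1 - ut$ is strictly convex on $[0,\infty)$, vanishes at $0$ with negative derivative $1-u$, and blows up at infinity, so it has a unique positive root) and that $\xi(u)\to\infty$ as $u\to\infty$, since otherwise $e^\xi$ would be bounded while the right-hand side $1+u\xi$ is unbounded.

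Next, to get $\xi\sim\log u$, I would take the logarithm of the defining equation:
\[
\xi = \log(1+u\xi) = \log u + \log \xi + \log\!\Bigl(1+\frac{1}{u\xi}\Bigr).
\]
The last term is $O(1/(u\xi))=o(1)$ as $u\to\infty$ since $\xi\to\infty$, so $\xi-\log\xi = \log u + o(1)$. Because $\log\xi = o(\xi)$ for large $\xi$, this forces $\xi/\log u\to 1$.

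For the derivative, differentiate $e^\xi = 1+u\xi$ implicitly:
\[
e^\xi\,\xi' = \xi + u\xi' \quad\Longrightarrow\quad \xi' = \frac{\xi}{e^\xi - u} = \frac{\xi}{1+u(\xi-1)},
\]
where the second equality substitutes $e^\xi = 1+u\xi$. Factoring out $1/u$,
\[
\xi' = \frac{1}{u}\cdot\frac{1}{1-\tfrac{1}{\xi}+\tfrac{1}{u\xi}} = \frac{1}{u}\Bigl(1+O\Bigl(\frac{1}{\xi}\Bigr)\Bigr).
\]
Combining with $\xi\sim\log u$ gives $\xi' = u^{-1}(1+O(1/\log u))$ once $u$ is large. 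For $u$ in a bounded subinterval of $[2,\infty)$ the estimate is trivial: $\xi$ and $\xi'$ are smooth and strictly positive there (at $u=2$ one checks $\xi(2)>1$, so the denominator $1+u(\xi-1)$ stays positive), so continuity on a compact interval absorbs the bound into the implicit constant.

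The only mildly subtle point is confirming that $\xi$ is strictly increasing and bounded away from $0$ on $[2,\infty)$ so that the derivative formula is valid and $1/\xi$ is genuinely $O(1/\log u)$ uniformly; this follows because $\xi(2)>1$, so $1+u(\xi-1)>0$ and hence $\xi'>0$ throughout, and then the asymptotic $\xi\sim\log u$ gives the uniform estimate. There is no real obstacle beyond this bookkeeping.
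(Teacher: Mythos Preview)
Your argument is correct. The paper does not supply its own proof of this lemma; it simply cites \cite[Lem.~1]{hildebrand1984}, so there is nothing to compare against beyond noting that your derivation is the standard one from the implicit relation $e^\xi=1+u\xi$.

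One small remark on the bookkeeping: rather than arguing indirectly that $\xi>1$ persists for $u\ge 2$, it is cleaner to observe that $u=(e^\xi-1)/\xi=I'(\xi)$ is strictly increasing in $\xi>0$ (since $I''(\xi)=(e^\xi(\xi-1)+1)/\xi^2>0$), so $\xi(u)$ is globally smooth and strictly increasing on $(1,\infty)$; then $\xi(2)>1$ immediately gives $\xi(u)>1$ for all $u\ge 2$ and the denominator $1+u(\xi-1)$ is positive throughout. This avoids the continuity/first-crossing argument entirely, but your version is also fine.
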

\begin{lem}\cite[Lem.~6]{Manstavicius2016}\label{lem:uniformxi}
If $u>1$ we have $\log u < \xi \le 2 \log u$.
\end{lem}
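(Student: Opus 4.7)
The plan is to analyze the auxiliary function
\[ g(\xi) := 1 + u\xi - e^{\xi}, \]
whose unique positive zero is $\xi(u)$ by definition. I first establish the monotonicity picture: $g(0)=0$ and $g'(\xi) = u - e^{\xi}$, so $g'(0) = u - 1 > 0$ for $u > 1$, and $g'$ vanishes exactly at $\xi = \log u$. Hence $g$ is strictly increasing on $[0,\log u]$ and strictly decreasing on $[\log u,\infty)$ with $g(\xi) \to -\infty$. In particular, there is exactly one positive root, and its location relative to any point $\xi_0 > \log u$ is decided by the sign of $g(\xi_0)$: if $g(\xi_0) > 0$ then $\xi > \xi_0$, and if $g(\xi_0) \le 0$ then $\xi \le \xi_0$. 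So both halves of the lemma reduce to verifying the sign of $g$ at $\log u$ and at $2\log u$.

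For the lower bound I want to show $g(\log u) > 0$, i.e.
\[ 1 + u\log u - u > 0, \qquad \text{equivalently} \quad u\log u > u - 1. \]
This is the standard inequality $\log u > 1 - 1/u$ multiplied by $u$, and it follows at once by checking that both sides agree at $u=1$ and comparing derivatives: the derivative of $\log u - (1-1/u)$ is $(u-1)/u^2 > 0$ for $u>1$. This gives $\xi > \log u$.

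For the upper bound I want to show $g(2\log u) \le 0$, i.e.
\[ h(u) := u^2 - 2u \log u - 1 \ge 0 \quad \text{for } u \ge 1. \]
I would verify this by taking two derivatives: $h(1)=0$, $h'(u) = 2u - 2\log u - 2$ with $h'(1)=0$, and $h''(u) = 2 - 2/u \ge 0$ on $[1,\infty)$, with strict inequality for $u>1$. Two successive integrations from $1$ then give $h(u) \ge 0$ (in fact strict for $u>1$), so that $g(2\log u) \le 0$ and therefore $\xi \le 2\log u$.

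There is no serious obstacle; the whole statement is a one-variable convexity exercise on the transcendental equation $e^\xi = 1 + u\xi$. The only thing to be mildly careful about is to present the monotonicity of $g$ first, since without it one cannot conclude $\xi > \xi_0$ from $g(\xi_0) > 0$ (the function could in principle re-cross zero). Once the shape of $g$ is established, both bounds are immediate from the two elementary one-variable inequalities above.
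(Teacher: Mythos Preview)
Your proof is correct. The paper itself does not supply a proof of this lemma; it simply cites \cite[Lem.~6]{Manstavicius2016}, so there is no argument in the paper to compare against. Your self-contained convexity analysis of $g(\xi)=1+u\xi-e^{\xi}$ is exactly the natural way to establish both inequalities and would serve as a complete proof if one wanted to include it.
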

Let
\[	I(s) = \int_{0}^{s} \frac{e^t-1}{t} {\mathrm d}t\]
which defines an entire function. Observe that 
\[ I'(\xi) = \frac{e^{\xi}-1}{\xi}=u,\]
and a short computation shows $I''(\xi)=1/\xi'$. The following lemma is proved by induction.
\begin{lem}\label{lem:Iform}
	For any $k \ge 1$ we have $I^{(k)}(s) = (e^s P_{k-1}(s)-(-1)^{k-1} (k-1)!)/s^k$ for a monic polynomial $P_{k-1}$ of degree $k-1$. 
\end{lem}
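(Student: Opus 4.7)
The plan is to proceed by direct induction on $k$, exactly as the statement hints.

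For the base case $k=1$, I differentiate $I(s) = \int_0^s (e^t-1)/t\, dt$ to get $I'(s) = (e^s-1)/s$, which matches the claimed form with $P_0(s) = 1$, a monic polynomial of degree $0$, and the constant $-(-1)^0 \cdot 0! = -1$.

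For the inductive step, I assume $I^{(k)}(s) = (e^s P_{k-1}(s) - (-1)^{k-1}(k-1)!)/s^k$ with $P_{k-1}$ monic of degree $k-1$, and differentiate using the quotient rule. A short computation yields
\[
I^{(k+1)}(s) = \frac{e^s\bigl[s P_{k-1}(s) + s P_{k-1}'(s) - k P_{k-1}(s)\bigr] - (-1)^k k!}{s^{k+1}},
\]
so I set $P_k(s) := s P_{k-1}(s) + s P_{k-1}'(s) - k P_{k-1}(s)$. Writing $P_{k-1}(s) = s^{k-1} + \text{(lower)}$, the term $s P_{k-1}(s)$ contributes the leading $s^k$ with coefficient $1$, while $sP_{k-1}'(s) - kP_{k-1}(s)$ contributes only terms of degree at most $k-1$ (in fact the $s^{k-1}$ coefficients cancel: $(k-1) - k \cdot 1$ leaves a nonzero $-s^{k-1}$ term, which is still of degree $\le k-1$). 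Hence $P_k$ is a polynomial of degree exactly $k$ with leading coefficient $1$, completing the induction.

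There is no real obstacle here; the only thing to watch is bookkeeping of the sign $-(-1)^{k-1}(k-1)!$ under differentiation, which becomes $+k \cdot (-1)^{k-1}(k-1)!/s^{k+1} = -(-1)^k k!/s^{k+1}$, matching the shape of the inductive claim at level $k+1$.
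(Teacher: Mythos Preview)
Your proof is correct and is exactly the direct induction the paper indicates; the paper gives no further details beyond stating that the lemma is ``proved by direct induction,'' and your computation fills this in accurately.
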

It has the following direct consequence.
\begin{cor}\label{cor:isize}
	For any fixed $k \ge 1$ we have $I^{(k)}(\xi) =(1+O_k(1/\log u))\frac{e^{\xi}}{\xi}\sim u$ as $u \to \infty$, and $I^{(k)}(\xi+it) \ll_k \min\{u, e^{\xi(u)}/|\xi+it|\}$ uniformly for $t \in \RR$.
\end{cor}
The function $I$ arises when studying $\rho$ and its Laplace transform.
\begin{lem}\cite[Eq.~(1.9)]{Debruijn}\cite[Eq.~(3.9)]{Alladi}\label{lem:rho i transform}
Let $\gamma$ be the Euler--Mascheroni constant. We have
\[	\hat{\rho}(s) := \int_{0}^{\infty} e^{-sv}\rho(v)\, {\mathrm d}v = \exp\left( \gamma + I(-s) \right)\]
for $s \in \CC$. Uniformly for $u \ge 1$,
	\begin{equation}\label{eq:rho and i}
		\rho(u)=\frac{1}{\sqrt{2\pi I''(\xi)}}\exp\left( \gamma-u\xi + I(\xi)\right)(1 + O( u^{-1})).
	\end{equation}
\end{lem}
We have the following bounds on $\hat{\rho}$.
\begin{lem}\cite[Lem.~2.7]{HT}\label{lem:i bounds}
	The following bounds hold for $s=-\xi(u)+it$:
\[		\hat{\rho}(s) = \begin{cases} O\left(\exp\left(I(\xi)-\frac{t^2u}{2\pi^2}\right)\right) & \mbox{if }|t| \le \pi,\\
			O\left(\exp\left(I(\xi)-\frac{u}{\pi^2+\xi^2}\right)\right) & \mbox{if }|t| \ge \pi,\\
			\frac{1}{s} + O\left( \frac{1+u \xi }{|s|^2} \right) & \mbox{if }1+u\xi =O(|t|).\end{cases}\]
\end{lem}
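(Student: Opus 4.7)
The plan is to pass everything through Lemma~\ref{lem:rho i transform}, which gives $\hat\rho(-\xi+it) = \exp(\gamma + I(\xi-it))$; the first two bounds then reduce to estimates on $\mathrm{Re}(I(\xi-it) - I(\xi))$. Substituting $t\mapsto \xi v$ in \eqref{eq:Idef} yields the representation $I(s) = \int_0^1 (e^{sv}-1)/v\,dv$, from which
\[
\mathrm{Re}\bigl(I(\xi-it) - I(\xi)\bigr) = \int_0^1 e^{\xi v}\,\frac{\cos(tv)-1}{v}\,dv \le 0.
\]
The third bound is instead obtained from the asymptotic behaviour of $I$ at infinity.

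For $|t|\le\pi$ we have $|tv|\le\pi$ throughout, so the elementary inequality $\cos x - 1 \le -2x^2/\pi^2$ valid on $|x|\le\pi$ (coming from $|\sin(x/2)|\ge |x|/\pi$) gives $\mathrm{Re}(I(\xi-it)-I(\xi)) \le -(2t^2/\pi^2)\int_0^1 v e^{\xi v}\,dv$. Evaluating the last integral as $((\xi-1)e^\xi+1)/\xi^2$ and using the defining relation $e^\xi = 1+u\xi$, this simplifies to $-2t^2(1+u(\xi-1))/(\pi^2\xi)$, which is at most $-t^2 u/\pi^2$ once $\xi\ge 2$; bounded $\xi$ corresponds by Lemma~\ref{lem:uniformxi} to bounded $u$ and the claimed factor $\exp(-t^2 u/(2\pi^2))$ is then $\Theta(1)$, so the bound is trivial. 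Exponentiating gives the first estimate.

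For $|t|\ge\pi$, I would apply Laplace's method at the peak $v=1$ of $\int_0^1 e^{\xi v}(1-\cos(tv))/v\,dv$. When $t$ is bounded away from multiples of $2\pi$, Watson's lemma gives a leading term $(1-\cos t)e^\xi/\xi \asymp u$, already well above the target $u/(\pi^2+\xi^2)$. Near a resonance $t=2\pi k$ the leading Watson coefficient vanishes and the next coefficient, $((1-\cos t) - t\sin t)/\xi^2$, can itself be negative, so one must push the expansion to the next order where a positive term of size $\asymp e^\xi/\xi^3 \asymp u/\xi^2$ dominates; this still exceeds $u/(\pi^2+\xi^2)$ for large $\xi$, while bounded $\xi$ is handled by continuity and compactness. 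I expect this case to be the main obstacle: producing a single uniform lower bound covering generic, near-resonant, and bounded-$\xi$ regimes simultaneously requires careful simultaneous control of the first several Watson coefficients as functions of $t$.

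For $1+u\xi = O(|t|)$, I would use the identity $\gamma + I(-s) + \log s = -E_1(s)$, where $E_1$ denotes the exponential integral; this is verified by checking that both sides have derivative $(e^{-s}-1)/s$ and fixing the constant via the known asymptotic $\hat\rho(s)\sim 1/s$ for $s\to +\infty$ along the real axis, yielding $\hat\rho(s) = e^{-E_1(s)}/s$. The classical asymptotic $E_1(s) = (e^{-s}/s)(1+O(1/|s|))$ holds uniformly for $|s|\to\infty$ in any sector avoiding the negative real axis, and $s=-\xi+it$ lies in such a sector (its argument belongs to $(\pi/2,\pi)\cup(-\pi,-\pi/2)$). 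Hence $|E_1(s)| = O(e^\xi/|s|) = O((1+u\xi)/|s|) = O(1)$ in the given range, so $e^{-E_1(s)} = 1 + O((1+u\xi)/|s|)$, producing the claimed $\hat\rho(s) = 1/s + O((1+u\xi)/|s|^2)$.
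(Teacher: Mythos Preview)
The paper does not supply its own proof of this lemma: it is quoted verbatim from Tenenbaum \cite[Lem.~III.5.12]{Tenenbaum2015}, with only the remark that the hypothesis $1+u\xi\le|t|$ in the third case may be relaxed to $1+u\xi=O(|t|)$ by the same argument. So there is nothing in the paper to compare against; I can only assess your argument on its own terms.

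Your treatment of the first and third cases is correct. In the first case the inequality $1-\cos x\ge 2x^2/\pi^2$ on $|x|\le\pi$ combined with the explicit evaluation of $\int_0^1 ve^{\xi v}\,dv$ and the relation $e^\xi=1+u\xi$ gives exactly what is needed for $\xi\ge 2$, and bounded $\xi$ is indeed trivial. In the third case the identity $\hat\rho(s)=e^{-E_1(s)}/s$ is standard, and the key point that $s=-\xi+it$ stays in a fixed sector $|\arg s|\le\pi-\delta$ (since $|t|\ge c(1+u\xi)\ge c\xi$ forces $|t|/\xi\ge c$) is correctly identified, so the uniform asymptotic for $E_1$ applies and yields the claim.

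The second case, however, is not a proof but a programme. Watson's lemma produces an asymptotic expansion as $\xi\to\infty$ for each fixed $t$, not a lower bound uniform in both $t\ge\pi$ and $\xi>0$. Near the resonances $t\in 2\pi\ZZ$ you would need the first few Watson coefficients simultaneously, with explicit control of the remainder, and you would still have to patch this together with a separate argument for bounded $\xi$ (where Laplace's method gives nothing). This is workable in principle but considerably more delicate than you indicate. A more robust route, closer in spirit to what is done in \cite{Tenenbaum2015}, is to lower-bound the integral $\int_0^1 e^{\xi v}(1-\cos tv)v^{-1}\,dv$ directly: for instance, dropping the factor $v^{-1}\ge 1$ and computing $\int_0^1 e^{\xi v}(1-\cos tv)\,dv = u - \mathrm{Re}\,\frac{e^{\xi-it}-1}{\xi-it}$ already yields, after the crude estimate $|e^{\xi-it}-1|\le e^\xi+1=2+u\xi$, a lower bound $u - (2+u\xi)/\sqrt{\xi^2+t^2}\ge u-(2+u\xi)/\sqrt{\xi^2+\pi^2}$, which for large $\xi$ is $\sim u\pi^2/(2\xi^2)\gg u/(\pi^2+\xi^2)$; the bounded-$\xi$ regime then reduces to a compactness argument. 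Either way, your second case as written has a genuine gap.
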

\subsection{\texorpdfstring{$T$}{T}}
We define the following function, holomorphic in the strip $|\Im s |<2\pi m$:
\[	T(s) := \int_{0}^{s} \frac{e^t-1}{t}\left( \frac{\frac{t}{m}}{1-e^{-t/m}}-1\right){\mathrm d}t.\]
The Bernoulli numbers $B_i$ are defined by
\[\frac{s}{1-e^{-s}} = 1+\frac{s}{2} +\frac{s^2}{12}-\frac{s^4}{720}+ \ldots = \sum_{i\ge 0} B_i \frac{s^i}{i!}.\]
This series converges for $|\Im s|<2\pi$. It is known that $B_i \ll i!/(2\pi)^i$. It is then easy to see, from the estimate $\int_{0}^{s} (e^t-1)t^{i-1} {\mathrm d}t \ll  |s|^{i-1} (|e^s|+|s|)$ which holds for $\Re s \ge 0$ and $i \ge 1$, that
	\begin{equation}\label{eq:estT}
	T(s) = \sum_{i=1}^{k}  \frac{B_i}{m^i i!} \int_{0}^{s} (e^t-1)t^{i-1} {\mathrm d}t + O\bigg( \frac{|s|^k (|e^{s}|+|s|)}{(2\pi m)^{k+1}} \bigg)
\end{equation}
	holds for $k \ge 0$ and $s=\sigma+it$ with $\sigma \in [0,\pi m]$ and $t \in [-\pi m,\pi m]$. Applying \eqref{eq:estT} with $k=1$ we obtain
\begin{cor}\cite[Lem.~11]{Manstavicius2016} \label{cor:t}
	For $s=\sigma+it$ with $\sigma\in [0,\pi m]$ and $t\in [-\pi m,\pi m]$,
\[	\left|T(s)+\frac{s}{2m}\right| \ll \frac{e^{\sigma}}{m} + \frac{t^2}{m^2}.\]
\end{cor}
Applying \eqref{eq:estT} with $k=2$ and $s=\xi=\xi(u)$ we obtain
\begin{equation}\label{eq:lemT1}
T(\xi) = \frac{(u-1)\xi}{2m} + \frac{e^{\xi}(\xi-1)-\frac{\xi^2}{2}+1}{12m^2} + O\bigg( \frac{u\log^3 (u+1)}{m^3}\bigg)
\end{equation}
when $\xi(u) \le \pi m$ holds.
\begin{lem}\label{lem:Tderivs}
	Suppose $\xi=\xi(u) \le \pi m$. For any $k \ge 1$ we have
	\begin{equation}\label{eq:tdev1}
	\begin{split}
		T^{(k)}(\xi) &= I^{(k)}(\xi)\left( \frac{\xi/m}{1-e^{-\xi/m}}-1+O_k\left( \frac{1}{m}\right)\right) \\
		&=\frac{e^{\xi}}{2m}\left(1+O_k\left(\frac{1}{\log(u+1)}+\frac{\log(u+1)}{m}\right)\right).
	\end{split}
	\end{equation}
If additionally $t \in [-\pi m, \pi m]$ then $T^{(k)}(\xi+it) \ll_k e^{\xi(u)}/m$.
\end{lem}
\begin{proof}
	We have $T'(s) = I'(s) g(s/m)$ for $g(z):=z/(1-e^{-z}) - 1$. Repeated differentiation shows
	\begin{equation}\label{eq:Tk1}
	T^{(k)}(s) = I^{(k)}(s) g(s/m)+O_k\bigg( \sum_{i=1}^{k-1}m^{-i}  |I^{(k-i)}(s)g^{(i)}(s/m)|\bigg).
\end{equation}
	We suppose that $s=\xi(u) + it$ and that the inequalities $\xi(u)\le \pi m$ and $|t|\le \pi m$ are satisfied. From \eqref{eq:Tk1},
	\begin{equation}\label{eq:Tk2} T^{(k)}(s) =I^{(k)}(s) g(s/m)+O_k\bigg( \frac{1}{m}\sum_{i=1}^{k-1}  |I^{(k-i)}(s)|\bigg)
		\end{equation}
	as our assumptions on $s$ guarantee that $g^{(i)}(s/m)\ll_i 1$. We first suppose $t=0$ and establish \eqref{eq:tdev1}. Using \eqref{eq:Tk2} and the definition of $g$, the first equality in \eqref{eq:tdev1} will follow if we show that $I^{(k-i)}(\xi(u)) \ll_k I^{(k)}(\xi(u))$, which is a consequence of Corollary~\ref{cor:isize}. The second equality in \eqref{eq:tdev1} follows from another application of Corollary~\ref{cor:isize}. Finally, the claim for $T^{(k)}(s)$ (when $|t|\le \pi m$) follows from \eqref{eq:Tk2} since $|I^{(k)}(s)| \ll_k e^{\xi(u)}/|s|$ and $|I^{(k-i)}(s)| \ll_k u$ ($1\le i \le k-1$) by Corollary~\ref{cor:isize} and $|g(s/m)| \ll |s|/m$.
\end{proof}
\subsection{\texorpdfstring{$H_n$}{Hn}}
Let $H_n:= \sum_{i=1}^{n} \frac{1}{i}$ be the $n$th harmonic sum. By the Euler--Maclaurin formula,
\begin{equation}\label{eq:harmonic}
H_n = \log n + \gamma + \frac{1}{2n} +O(n^{-2}).
\end{equation}
\section{Proof of Theorem~\ref{thm:between}}
We break the theorem into a proposition and a lemma.
\begin{proposition}\label{prop:between}
Uniformly for $n \ge m \ge 1$ we have
	\begin{equation}\label{eq:rhoall}
		p_{n,m} = \rho(u) \exp\left( \frac{u\xi(u)}{2m}\right) \exp\left(O\left(\frac{u\log^2(u+1)}{m^2} + \frac{1}{u} \right)\right).
	\end{equation}
	If $\sqrt{n \log n}=O(m)$ then
\begin{equation}\label{eq:2ndpartprop}
p_{n,m} = \rho(u) \exp\left( \frac{u\xi(u)}{2m}\right) \left(1 + O\left( \frac{\log (u+1)}{m}\right)\right).
\end{equation}
\end{proposition}
In \cite[Cors.~3, 5]{Manstavicius2016}, \eqref{eq:rhoall} is proved in the narrower range $n \ge m \ge n^{1/3}(\log n)^{2/3}$.
\begin{lem}\label{lem:shif}
	Uniformly for $n \ge m \ge 1$ we have
\begin{equation}\label{eq:rhoxirelation} \rho\left(u\right)\exp\left( \frac{u\xi(u)}{2m}\right) = \rho\left( u -\frac{u}{2m}\right) \exp\left(O\left(\frac{u}{m^2} + \frac{1}{m}\right)\right)
\end{equation}
and 
\begin{equation}\label{eq:m12}
\rho\left( u-\frac{u}{2m}\right) = \rho\left( \frac{n}{m+\frac{1}{2}}\right)\exp\left( O\left( \frac{u\log(u+1)}{m^2} \right)\right).
\end{equation}
\end{lem}
We prove \eqref{eq:rhoall} in \S\ref{sec:propbetween1}, and \eqref{eq:2ndpartprop} in \S\ref{sec:prelim2ndpart}--\S\ref{sec:propbetween2}. The proof of Lemma~\ref{lem:shif} is given in \S\ref{sec:lemshif}. To deduce Theorem~\ref{thm:between}, we combine \eqref{eq:2ndpartprop} and Lemma~\ref{lem:shif} if $m \ge \sqrt{n\log n}$, and \eqref{eq:rhoall} and Lemma~\ref{lem:shif} if $m\le \sqrt{n\log n}$. Here we use $\frac{u}{m^2}+\frac{1}{m} +\frac{u\log(u+1)}{m^2}\ll \frac{\log(u+1)}{m}$ when $m\ge \sqrt{n\log n}$ and $\frac{1}{u} + \frac{u}{m^2}+\frac{1}{m} +\frac{u\log(u+1)}{m^2}\ll \frac{u\log^2(u+1)}{m^2}$ when $m\le \sqrt{n\log n}$.
\subsection{Proof of the first part of Proposition~\ref{prop:between}}\label{sec:propbetween1}
Here we prove \eqref{eq:rhoall}. If $m=O(\log (u+1))$ then \eqref{eq:rhoall} follows from  \eqref{eq:pnmbound}. Hence we suppose that $m \ge C \log (u+1)$ from now on.
Similarly, if $u =O(1)$ then \eqref{eq:rhoall} is already in \eqref{eq:pnmbound}, so we suppose $u \ge C$. 
We define a function
\[ D(z) :=z^{-n} \exp\left(\sum_{i=1}^{m} \frac{z^i}{i}\right), \]
so that $x$ defined in \eqref{eq:saddledef2} solves $D'(z)=0$, and \eqref{eq:saddle} can be written as
\begin{equation}\label{eq:saddle2}
	p_{n,m}= \frac{D(x)}{\sqrt{2 \pi \lambda}} (1+O( u^{-1})).
\end{equation}
We can write $\log D(x)$ as
\[ \log D(x)=-n \log x + \sum_{j=1}^{m} \frac{1}{j}+I(m\log x)+T(m\log x),\]
see \cite[Eq.~(40)]{Manstavicius2016} for the details. 
We set
\[ \xii := m \log x\]
and define $\uu \ge 1$ as
\[ \uu := \frac{e^{\xii}-1}{\xii}.\]In this notation, \eqref{eq:saddle2} implies (using \eqref{eq:rho and i} with $\uu$ in place of $u$, and \eqref{eq:harmonic}) that
\[	p_{n,m} = \rho(\uu) \exp\left( (\uu-u)\xii\right) \exp \left(T(\xii)\right)\sqrt{ \frac{m^2 I''(\xi(\uu))}{\lambda}}\exp\left( O\left(u^{-1}+ \uu^{-1}+m^{-1}\right)\right)\]
for $u \ge C$. A short computation shows that $u=I'(\xi)=(I+T)'(\xii)$ holds by the definition of $\xi$, $\xii$ and $x$. Since $T'(t) \ge 0$ for $t \ge 0$ we have $\xii\le \xi$ and so, by the monotonicity of $I'(t)=(e^t-1)/t$, it follows that \[\uu=I'(\xii)\le I'(\xi)=u.\]
By \cite[Eq.~(22)]{Manstavicius2016},
\begin{equation}\label{eq:xidiff}
\xii = \xi  + O\left(\frac{\log (u+1)}{m}\right)
\end{equation}
uniformly in $m \ge \log u>0$. From \eqref{eq:xidiff} we see that, uniformly in $m \ge \log u>0$
\begin{equation}\label{eq:udiff}
\uu = u + O\left( \frac{u\log(u+1)}{m}\right).
\end{equation}
We will be using \eqref{eq:xidiff} and \eqref{eq:udiff} frequently.
For $m \ge \log u>0$ we have \[ \lambda = \frac{m^2}{\xi'(u)}\left(1+O\left( \frac{\log (u+1)}{m}\right)\right)\]
by \cite[Eq.~(23)]{Manstavicius2016}. Hence, since $I''(\xi(u))=1/\xi'(u)$,
\[ \sqrt{ \frac{m^2 I''(\xi(\uu))}{\lambda }}=\sqrt{ \frac{m^2}{\lambda \xi'(\uu)}}=\sqrt{\frac{\xi'(u)}{\xi'(\uu)}} \exp\left(O\left(\frac{\log(u+1)}{m}\right)\right)\]
for $m \ge C\log (u+1)>0$. Since $\xi'(t)\asymp 1/t$ and $\xi''(t) =-\xi'(t)I^{(3)}(\xi(t))/I''(\xi(t))^2 \ll 1/t^2$ for $t \ge C$,
\[ \frac{\xi'(u)}{\xi'(\uu)} =1 +O\left( \frac{u-\uu}{\uu}\right) = \exp\left(O\left( \frac{\log(u+1)}{m}\right)\right)\]
holds for $m \ge C\log (u+1)>0$ and $u \ge C$ and so
\[ \sqrt{ \frac{m^2I''(\xi(\uu))}{\lambda}}= \exp\left(O\left(\frac{\log(u+1)}{m}\right)\right)\] 
when $m \ge C\log (u+1)>0$ and $u \ge C$. At this point, we have established that
\[p_{n,m} = \rho(\uu) \exp\big((\uu-u) \xii\big) \exp \left(T(\xii)\right)\exp\bigg( O\bigg(  \frac{1}{u}+ \frac{\log(u+1)}{m}\bigg)\bigg)\]
holds in our range.
We have $\xii \le \xi \le 2\log u \le \pi m$ by Lemma \ref{lem:uniformxi} and our assumption $m \ge C\log(u+1)$. By \eqref{eq:lemT1}, \eqref{eq:xidiff} and \eqref{eq:udiff},
\[	T(\xii) = \frac{\uu\xii}{2m} + O\bigg( \frac{u\log^2(u+1)}{m^2}+\frac{\log(u+1)}{m}\bigg)= \frac{u\xi(u)}{2m} + O\bigg( \frac{u\log^2(u+1)}{m^2} + \frac{\log(u+1)}{m}\bigg)\]
in this range. We now have
\[	p_{n,m} = \exp\bigg(\frac{u\xi(u)}{2m}\bigg)\rho(\uu) \exp\big( (\uu-u)\xii\big) \exp\bigg( O\bigg(\frac{1}{u}+ \frac{u\log^2(u+1)}{m^2}\bigg)\bigg) \]
for $m \ge C\log (u+1)$, $u \ge C$.
Let $r(t)=-\rho'(t)/\rho(t)$ be the negative of the logarithmic derivative of $\log \rho$. We have \[\rho(\uu)=\rho(u)\exp\left(\int_{\uu}^{u}r(t){\mathrm d}t\right).\]
By integration by parts, we may write the integral in the exponent as
\[	\int_{\uu}^{u}r(t){\mathrm d}t
	= (u-\uu)r(u) - \int_{\uu}^{u} (t-\uu)r'(t) {\mathrm d}t. \]
By \cite[Lem.~3.7]{LaBreteche2005}, the estimates
\begin{equation}\label{eq:LaBrTe}
r(v) = \xi(v) + O\left(v^{-1}\right), \qquad  r'(v)  = \xi'(v)+O\left(v^{-2}\right)
\end{equation}
hold uniformly for $v \ge 1$. Hence, using \eqref{eq:xidiff} and \eqref{eq:udiff},
\begin{align*}
\rho(\uu) \exp( (\uu-u)\xii) &= \rho(u) \exp((u-\uu) (r(u)-\xi(u)+\xi(u)-\xii) \\
& \qquad - \int_{\uu}^{u}(t-\uu)(r'(t)-\xi'(t)+\xi'(t)){\mathrm d}t )\\
&=\rho(u) \exp\bigg( O\bigg(\frac{1}{u} +  \frac{u\log^2(u+1)}{m^2}\bigg)-\int_{\uu}^{u} (t-\uu)\xi'(t){\mathrm d}t\bigg).
\end{align*}
To conclude the proof, it remains to bound $\int_{\uu}^{u}(t-\uu)\xi'(t){\mathrm d}t$. Since $\xi'(t) \ll 1/t$,
\[ \int_{\uu}^{u}(t-\uu)\xi'(t){\mathrm d}t\ll \int_{\uu}^{u} \frac{t-\uu}{t} {\mathrm d}t = \uu \left( \frac{u}{\uu}-1-\log \left(\frac{u}{\uu}\right)\right) \ll \uu \frac{(u-\uu)^2}{\uu^2} \ll \frac{u\log^2(u+1)}{m^2}\]
if $m \ge C\log (u+1)$, using \eqref{eq:xidiff} and \eqref{eq:udiff}. The proof of \eqref{eq:rhoall} is completed.
\subsection{Preliminary lemmas for second part of Proposition~\ref{prop:between}}\label{sec:prelim2ndpart}
The following consequence of Cauchy's integral formula is implicit in the proofs of \cite[Thms.~2,\,4]{Manstavicius2016}. 
\begin{lem}\label{lem:pnmstart}
	We have
	\[ p_{n,m} = \frac{\exp\big(H_m -\gamma\big)}{m}	 \frac{1}{2\pi i} \int_{-\xi-im\pi}^{-\xi+im\pi} e^{us} \hat{\rho}(s) e^{T(-s)}{\mathrm d}s.\]
\end{lem}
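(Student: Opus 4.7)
The plan is to start from the exponential formula $\sum_{n\ge 0}p_{n,m}z^n = \exp(\sum_{i=1}^m z^i/i) =: F(z)$, which by Cauchy's formula applied to the entire function $F$ gives
\[ p_{n,m} = \frac{1}{2\pi i}\oint_{|z|=r}F(z)\,\frac{dz}{z^{n+1}} \]
for any $r>0$. I will take $r=e^{\xi/m}$ and parametrize the circle by $z=e^{-s/m}$, which transforms the contour into the vertical segment $\Re s = -\xi$, $|\Im s|\le m\pi$. The choice of $r$ is forced on us by the desired form of the lemma; the analyticity of $F$ means any positive $r$ is admissible by Cauchy.

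The key algebraic input is the identity
\[ \sum_{i=1}^m \frac{z^i}{i} = H_m + I(-s)+T(-s),\qquad z=e^{-s/m}. \]
I would prove this by differentiating in $s$. On the left, $\partial_s z=-z/m$ together with $\sum_{i=1}^m z^{i-1}=(z^m-1)/(z-1)$ gives $\partial_s\sum_{i=1}^m z^i/i = (e^{-s}-1)/(m(e^{s/m}-1))$. On the right, $\partial_s I(-s)=(e^{-s}-1)/s$ and the definition \eqref{eq:Tdef} of $T$ together yield the same expression. Since both sides equal $H_m$ at $s=0$ (where $z=1$ and $I(0)=T(0)=0$), the identity follows.

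Next I perform the substitution in the Cauchy integral. A direct computation gives $z^{-n-1}\,dz=-e^{us}\,ds/m$ (using $z^{-n}=e^{ns/m}=e^{us}$), while the circle $|z|=e^{\xi/m}$ is traversed clockwise as $\Im s$ increases from $-m\pi$ to $m\pi$ along $\Re s=-\xi$. The two sign flips cancel, producing
\[ p_{n,m} = \frac{1}{2\pi i m}\int_{-\xi-im\pi}^{-\xi+im\pi} e^{us}\,F(e^{-s/m})\,ds. \]
Inserting the identity from the previous paragraph and using Lemma~\ref{lem:rho i transform} to write $e^{I(-s)}=e^{-\gamma}\hat{\rho}(s)$, we obtain $F(e^{-s/m})=e^{H_m-\gamma}\hat{\rho}(s)e^{T(-s)}$, which produces the claimed formula. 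The only point requiring care is the sign/orientation bookkeeping under $z=e^{-s/m}$, and verifying that $T(-s)$ is holomorphic on the contour (which holds since $|\Im s|\le m\pi<2\pi m$); otherwise everything reduces to Cauchy's formula together with the one-line integration identity above.
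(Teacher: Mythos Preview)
Your proof is correct and takes essentially the same approach as the paper, which simply attributes the lemma to Cauchy's integral formula (citing that it is implicit in \cite[Thms.~2,\,4]{Manstavicius2016}). Your write-up makes the substitution $z=e^{-s/m}$ and the algebraic identity $\sum_{i=1}^m z^i/i = H_m + I(-s)+T(-s)$ explicit, with the orientation bookkeeping done correctly; this is exactly the computation underlying the paper's one-line justification.
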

The following lemma is implicit in the proof of \cite[Thm.~4]{Manstavicius2016}.
\begin{lem}\label{lem:rhoonly}
	Suppose $\sqrt{n\log n}=O(m)$. Then
	\[\frac{1}{2\pi i} \int_{-\xi-im\pi}^{-\xi+im\pi} e^{us} \hat{\rho}(s){\mathrm d}s = \rho(u) +O\left( \frac{e^{-(u-1)\xi}}{n}\right).\]
\end{lem}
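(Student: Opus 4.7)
The plan is to deduce the lemma from the Bromwich-type identity
\[
\rho(u) = \frac{1}{2\pi i}\int_{-\xi - i\infty}^{-\xi + i\infty} e^{us}\hat{\rho}(s)\,ds,
\]
valid as a principal value. One obtains this by starting with the standard inverse Laplace transform on a far-right vertical line $\Re s = c$ with $c>0$ and shifting leftward to $\Re s = -\xi$; since $\hat{\rho}$ is entire by Lemma~\ref{lem:rho i transform}, no residues are crossed, and the horizontal cross-segments at height $|t|=T$ vanish as $T \to \infty$ by the Riemann--Lebesgue lemma combined with Lemma~\ref{lem:i bounds}. Granted this, the lemma reduces to the tail estimate
\[
J := \frac{1}{2\pi i}\int_{\substack{\Re s = -\xi \\ |\Im s|>m\pi}} e^{us}\hat{\rho}(s)\,ds \ll \frac{e^{-(u-1)\xi}}{n}.
\]

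The hypothesis $m \gg \sqrt{n\log n}$ together with Lemma~\ref{lem:uniformxi} gives $1+u\xi \ll u\log u = (n/m)\log(n/m) \ll m$, so the third case of Lemma~\ref{lem:i bounds} applies throughout $|\Im s|\ge m\pi$; in particular $|\hat{\rho}(-\xi+it)| \ll 1/|t|$ there. Naively bounding $J$ with this estimate costs a non-integrable $1/|t|$, so a further saving of size $e^{\xi}=1+u\xi$ is needed to reach $e^{-(u-1)\xi}/n$; I would extract it via one integration by parts.

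Writing $s = -\xi + it$ and using $e^{us}/(iu)$ as antiderivative of $e^{us}$, integration by parts in $t$ produces a boundary at $|t|=\infty$, which vanishes by Riemann--Lebesgue, and a boundary at $t=\pm m\pi$ of size $O(e^{-u\xi}/(um)) = O(e^{-u\xi}/n)$, using $|\hat{\rho}(-\xi \pm im\pi)| \ll 1/m$. The remaining integrand features $\hat{\rho}'(s)$; differentiating $\hat{\rho}(s) = \exp(\gamma + I(-s))$ yields
\[
\hat{\rho}'(s) = -I'(-s)\hat{\rho}(s) = -\frac{1-e^{-s}}{s}\hat{\rho}(s).
\]
On $\Re s = -\xi$ with $|t|\ge m\pi$ we have $|1-e^{-s}| \le 1+e^{\xi} = 2+u\xi \ll 1+u\xi$ (using $e^{\xi}=1+u\xi$) and $|\hat{\rho}(s)| \ll 1/|t|$, so $|\hat{\rho}'(s)| \ll (1+u\xi)/t^2$. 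The residual integral therefore contributes
\[
\frac{e^{-u\xi}}{u}\int_{|t|>m\pi} \frac{1+u\xi}{t^2}\,dt \ll \frac{e^{-u\xi}(1+u\xi)}{um} = \frac{e^{-u\xi}e^{\xi}}{n} = \frac{e^{-(u-1)\xi}}{n},
\]
which dominates the boundary term and gives $J \ll e^{-(u-1)\xi}/n$, as needed.

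The main technical hurdle is the rigorous justification of the Bromwich identity on the shifted contour $\Re s = -\xi$; this is a routine but slightly delicate contour-shift argument from $\Re s = c$ with $c$ large, and once installed, the rest of the proof is just one integration by parts together with the pointwise bound of Lemma~\ref{lem:i bounds} and the explicit formula for $\hat{\rho}'$. No saddle-point evaluation is required because the target bound is well away from the saddle contribution $\rho(u)$ itself.
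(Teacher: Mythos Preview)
Your proposal is correct. The paper does not give its own proof of this lemma; it simply states that the result ``is implicit in the proof of \cite[Thm.~4]{Manstavicius2016}.'' Your approach---write $\rho(u)$ via the Bromwich inversion on $\Re s=-\xi$ and bound the tail $|\Im s|>m\pi$ by one integration by parts using $\hat{\rho}'(s)=\frac{e^{-s}-1}{s}\hat{\rho}(s)$ together with the third case of Lemma~\ref{lem:i bounds}---is exactly the natural one, and it mirrors the argument the paper \emph{does} write out for the adjacent Lemma~\ref{lem:rhohigh}. One cosmetic remark: the vanishing of the boundary term at $|t|=\infty$ and of the horizontal cross-segments already follows from the explicit decay $\hat{\rho}(s)=1/s+O((1+u\xi)/|s|^2)$ supplied by Lemma~\ref{lem:i bounds} (extended to the strip $-\xi\le \Re s\le c$ via a single integration by parts in the defining integral of $\hat{\rho}$), so there is no need to invoke Riemann--Lebesgue separately.
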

A variant of the next lemma is implicit in the proof of \cite[Thm.~2]{Manstavicius2016}.
\begin{lem}\label{lem:rhohigh}
	Suppose $\sqrt{n\log n}=O(m)$ and $1+u\xi \le A \le m\pi$. Then
\[ \frac{1}{2\pi i} \int_{-\xi+iA}^{-\xi+im\pi} e^{us} \hat{\rho}(s) \left(e^{T(-s)-T(\xi)}-1\right){\mathrm d}s \ll \frac{e^{-u\xi}\log(u+1)}{m}. \]
The same bound holds if we integrate from $-\xi-im\pi$ to $-\xi-iA$.
\end{lem}
\begin{proof}
We have $\hat{\rho}'(s) = \rho(s) (e^{-s}-1)/s$. Integration by parts shows that the integral is equal to 
	\begin{multline*}
	\frac{1}{2\pi i} \frac{e^{us}}{u}\hat{\rho}(s) \left(e^{T(-s)-T(\xi)}-1\right) \Big|^{s=-\xi+im\pi}_{s=-\xi+iA} \\-\frac{1}{2\pi i} \int_{-\xi+iA}^{-\xi+im\pi} \frac{e^{us}}{u} \hat{\rho}(s)\left( \frac{e^{-s}-1}{s} \left(e^{T(-s)-T(\xi)}-1\right) -T'(-s) e^{T(-s)-T(\xi)}\right){\mathrm d}s.
\end{multline*}
	By Corollary~\ref{cor:t}, $T(-s)-T(\xi)$ is bounded in our range of integration. In fact, $T(-s)-T(\xi) \ll (e^{\xi}+|\Im s|)/m$. Hence, the third case of Lemma~\ref{lem:i bounds} shows that
	\[ \frac{1}{2\pi i} \frac{e^{us}}{u}\hat{\rho}(s) \left(e^{T(-s)-T(\xi)}-1\right) \Big|^{s=-\xi+im\pi}_{s=-\xi+iA} \ll \frac{e^{-u\xi}}{n}\]
which is acceptable. As for the integral, we rearrange it using the definition of $T$:
	\begin{align*}
		\frac{1}{2\pi i} &\int_{-\xi+iA}^{-\xi+im\pi} \frac{e^{us}}{u} \hat{\rho}(s)\left( \frac{e^{-s}-1}{s} \left(e^{T(-s)-T(\xi)}-1\right)-T'(-s) e^{T(-s)-T(\xi)}\right){\mathrm d}s \\
		&= 	\frac{1}{2\pi i} \int_{-\xi+iA}^{-\xi+im\pi} \frac{e^{us}}{u} \frac{e^{-s}-1}{s} \hat{\rho}(s)\left( e^{T(-s)-T(\xi)}\frac{\frac{-s}{m}}{1-e^{s/m}}-1\right){\mathrm d}s.
	\end{align*}
By Corollary~\ref{cor:t} and the Taylor expansion $z/(1-e^{-z})=1+z/2+O(z^2)$, 
\[ e^{T(-s)-T(\xi)}\frac{\frac{-s}{m}}{1-e^{s/m}}-1 \ll \frac{|s|^2}{m^2} + \frac{e^{\xi}}{m}\]
in our range of integration. By the triangle inequality, the last integral is
\begin{multline*} \ll \frac{\log (u+1)}{e^{u\xi}} \int_{-\xi+iA}^{-\xi+im\pi} \frac{|\hat{\rho}(s)|}{|s|}\left( \frac{|s|^2}{m^2}+\frac{e^{\xi}}{m}\right)  |{\mathrm d}s|\\
 \ll \frac{\log (u+1)}{e^{u\xi}} \int_{-\xi+iA}^{-\xi+im\pi} \frac{1}{|s|^2}\left( \frac{|s|^2}{m^2}+\frac{e^{\xi}}{m}\right)  |{\mathrm d}s| \ll \frac{\log (u+1)}{m e^{u\xi}} 
\end{multline*}
where in the second inequality we used the third case of Lemma~\ref{lem:i bounds}.
\end{proof}
\begin{lem}\label{lem:abso}
	Suppose $0 \le B \le A$. Then, for every $k \ge 0$,
	\[\frac{1}{2\pi i} \int_{-\xi+iB}^{-\xi+iA} \left| (s+\xi)^k e^{us} \hat{\rho}(s)\right|| {\mathrm d}s| \ll_k \rho(u) \big( u^{-\frac{k}{2}} e^{-cuB^2} + A^{k+1}\sqrt{u}\exp\big(-\frac{u}{\xi^2+\pi^2}\big)\big).\]
	The same is true if we integrate from $-\xi-iA$ to $-\xi-iB$. In particular, for $B=0$ and $A\le \exp(c_k u/(\log (u+1))^2)$,
		\[\frac{1}{2\pi i} \int_{-\xi-iA}^{-\xi+iA} \left| (s+\xi)^k e^{us} \hat{\rho}(s)\right|\left| {\mathrm d}s\right| \ll_k \rho(u)u^{-\frac{k}{2}}.\]
\end{lem}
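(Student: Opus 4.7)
The plan is to parameterize the vertical segment by $s = -\xi+it$, giving $|ds|=dt$, $|(s+\xi)^k|=t^k$, and $|e^{us}|=e^{-u\xi}$, so that the integral becomes
\[ \frac{1}{2\pi}\int_B^A t^k e^{-u\xi}\left|\hat{\rho}(-\xi+it)\right|\,dt. \]
I would then split this range according to the two estimates in Lemma~\ref{lem:i bounds}: on $[B,\min(A,\pi)]$ I use the Gaussian bound $|\hat{\rho}(-\xi+it)|\ll \exp(I(\xi)-t^2u/(2\pi^2))$, and on $[\max(B,\pi),A]$ I use the uniform bound $|\hat{\rho}(-\xi+it)|\ll \exp(I(\xi)-u/(\pi^2+\xi^2))$. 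Either subinterval may be empty depending on where $B$ and $A$ sit relative to $\pi$. In both pieces, the prefactor $e^{I(\xi)-u\xi}$ will be converted to $\rho(u)\sqrt{u}$ via \eqref{eq:rho and i} combined with $\xi'\asymp 1/u$ from Lemma~\ref{lem:xi size}.

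For the Gaussian piece the task reduces to bounding $\int_B^\infty t^k e^{-ct^2 u}\,dt$. Substituting $t=r/\sqrt{u}$ gives $u^{-(k+1)/2}\int_{B\sqrt{u}}^\infty r^k e^{-cr^2}\,dr$; this tail is $O_k(1)$ when $B\sqrt{u}\ll 1$, and $\ll_k (B\sqrt{u})^{k-1}e^{-cB^2u}$ when $B\sqrt{u}$ is large (by one integration by parts). Since $x^{k-1}e^{-cx^2}\ll_k e^{-c'x^2}$ for any $c'<c$, in either case we obtain $\int_B^\infty t^k e^{-ct^2 u}\,dt\ll_k u^{-(k+1)/2}e^{-c'uB^2}$. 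Multiplied by $\rho(u)\sqrt{u}$, this contributes the first term $\rho(u) u^{-k/2}e^{-c'uB^2}$. For the uniform piece, $\int_{\max(B,\pi)}^A t^k\,dt \ll A^{k+1}$ yields the second term $\rho(u)\sqrt{u}A^{k+1}\exp(-u/(\pi^2+\xi^2))$. Summing produces the claimed bound, and the reflected segment is handled identically using $|\hat{\rho}(\bar{s})|=|\hat{\rho}(s)|$.

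For the ``in particular'' statement with $B=0$ and $A\le \exp(c_k u/\log^2(u+1))$, the first term becomes $\rho(u)u^{-k/2}$. For the second, Lemma~\ref{lem:uniformxi} implies $\pi^2+\xi^2\ll\log^2(u+1)$, so $u/(\pi^2+\xi^2)\ge c_0 u/\log^2(u+1)$ for an absolute $c_0>0$. Choosing $c_k\le c_0/(2(k+1))$ forces $A^{k+1}\exp(-u/(\pi^2+\xi^2))\le \exp(-u/(2(\pi^2+\xi^2)))$, which after multiplication by $\sqrt{u}$ is superpolynomially small in $u$ and is absorbed into $\rho(u)u^{-k/2}$. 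The only mildly delicate point in the argument is this absorption of polynomial factors into the exponential, both inside the Gaussian tail estimate and in the calibration of $c_k$; everything else is routine once the two regimes of Lemma~\ref{lem:i bounds} are stitched together.
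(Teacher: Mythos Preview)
Your argument is correct and follows essentially the same route as the paper: parameterize by $t$, invoke the first two cases of Lemma~\ref{lem:i bounds} to bound $|\hat{\rho}|$ by the sum (or, as you do, a piecewise split at $t=\pi$) of a Gaussian and a uniform term, integrate each, and then convert $e^{-u\xi+I(\xi)}$ to $\rho(u)\sqrt{u}$ via \eqref{eq:rho and i} together with $\xi'\asymp 1/u$. The paper compresses all of this into two displayed lines, but the content is identical to what you wrote.
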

\begin{proof}
	Using the first two parts of Lemma~\ref{lem:i bounds}, the triangle inequality shows that the integral is
	\begin{multline*}
		\ll e^{-u\xi +I(\xi)} \int_{B}^{A} |t|^k \left(\exp(-cut^2)+\exp\big(-\frac{u}{\xi^2+\pi^2}\big)\right) {\mathrm d}t\\ \ll_k e^{-u\xi +I(\xi)}  \bigg( u^{-\frac{k+1}{2}} \exp(-cuB^2)+ A^{k+1}\exp\big(-\frac{u}{\xi^2+\pi^2}\big)\bigg),
	\end{multline*}
	and we now use \eqref{eq:rho and i}.
\end{proof}
\begin{lem}\label{lem:rholin}
	Fix $k\ge 0$. Let $\mu_m:=m!/((m/2)!2^{m/2})$ for even $m$. Suppose $\sqrt{n\log n}=O(m)$ and $C_k\sqrt{\log(u+1)/u}\le A \le \exp(c_ku/(\log (u+1))^2)$. Then, for even $k$,
	\[\frac{1}{2\pi i} \int_{-\xi-iA}^{-\xi+iA} (s+\xi)^ke^{us} \hat{\rho}(s){\mathrm d}s = (1+O_k(u^{-1}))\rho(u) \mu_k  (-1)^{\frac{k}{2}} \xi'^{\frac{k}{2}}\]
	and for odd $k$,
	\[\frac{1}{2\pi i} \int_{-\xi-iA}^{-\xi+iA} (s+\xi)^ke^{us} \hat{\rho}(s){\mathrm d}s = (1+O_k(u^{-1}))\rho(u) \mu_{k+3} (-1)^{\frac{k+1}{2}} \xi'^{\frac{k+3}{2}} \frac{I^{(3)}(\xi)}{6}.\]
\end{lem}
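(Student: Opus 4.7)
The approach is a classical saddle-point expansion. Under the parametrization $s = -\xi + it$, the integrand $e^{us}\hat\rho(s)$ becomes a Gaussian in $t$ (centered at $t=0$ with variance $\xi'$) times a small perturbation, and the leading behavior is extracted via Gaussian moments. The first step is to truncate the integral to $|t|\le T_0 := C_k\sqrt{\log(u+1)/u}$ using Lemma~\ref{lem:abso} with $B=T_0$; choosing $C_k$ large enough makes the first term in that bound $O_k(\rho(u)u^{-k/2-1})$, and the hypothesis $A \le \exp(c_k u/(\log(u+1))^2)$ with $c_k$ small makes the second term super-polynomially small in $u$. Consequently, the contribution of $|t|\in[T_0,A]$ is absorbed into a relative error of size $O_k(u^{-1})$.

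On the inner range $|t|\le T_0$, I would use $\hat{\rho}(s) = \exp(\gamma + I(-s))$ and Taylor-expand $I(\xi-it)$ about $t=0$. The identities $I'(\xi)=u$ and $I''(\xi)=1/\xi'$ ensure that the linear term cancels the $iut$ in $us$ and the quadratic term produces $-t^2/(2\xi')$; combining with \eqref{eq:rho and i} gives
\[ e^{us}\hat{\rho}(s) = \rho(u)\sqrt{2\pi/\xi'}\,(1+O(u^{-1}))\,e^{-t^2/(2\xi')}\,e^{\psi(t)},\]
where $\psi(t):=\sum_{j\ge 3}\frac{(-it)^j}{j!}I^{(j)}(\xi)$. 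By Corollary~\ref{cor:isize}, on $|t|\le T_0$ the $j=3$ term dominates and $\psi(t)\ll t^3 u \ll (\log u)^{3/2}/\sqrt{u}=o(1)$, so $e^{\psi(t)}$ admits a convergent power-series expansion in $t$.

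Next I would expand $e^{\psi(t)}$ as a power series, integrate term by term against $t^k e^{-t^2/(2\xi')}$, and extend the range of integration back to $\RR$ at negligible cost (the Gaussian tail past $T_0$ contributes at worst $u^{-cC_k^2}$). The standard identity $\int_{\RR} t^{2j}e^{-t^2/(2\xi')}\,dt=\sqrt{2\pi\xi'}\,(\xi')^j (2j)!/(2^j j!)$, combined with vanishing of odd moments, yields: for \emph{even} $k$, the dominant contribution is from $e^{\psi(t)}\approx 1$, producing the claimed formula with sign $i^k=(-1)^{k/2}$; for \emph{odd} $k$, parity kills the $e^{\psi}=1$ contribution and the leading survivor is the cubic term $\frac{it^3 I^{(3)}(\xi)}{6}$ in $\psi$, which produces a $t^{k+3}$ Gaussian moment and sign $i^{k+1}=(-1)^{(k+1)/2}$.

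The hard part will be the bookkeeping for the error terms: one must verify that every subsequent term in the expansion of $e^{\psi}$ (higher derivatives in $\psi$, or products such as $\psi^2$) contributes $O(u^{-1})$ relative to the leading term. Using $I^{(j)}(\xi)\asymp_j u$ and $\xi'\asymp 1/u$ (Lemma~\ref{lem:xi size} and Corollary~\ref{cor:isize}), each additional factor of $t^2$ in the integrand translates to a factor $\xi'\asymp 1/u$ in the output, while each additional derivative factor $I^{(j)}(\xi)$ contributes a factor $u$. Hence the $t^4 I^{(4)}(\xi)/24$ term in $\psi$ and the $(it^3 I^{(3)}(\xi)/6)^2/2$ cross-term both give relative error $u\cdot(\xi')^2\asymp 1/u$, and every higher-order correction is similarly $O(u^{-1})$ or smaller. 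This routine but tedious expansion is the only nontrivial step in the argument.
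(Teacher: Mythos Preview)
Your proposal is correct and follows essentially the same approach as the paper: truncate to $|t|\le C_k\sqrt{\log(u+1)/u}$ via Lemma~\ref{lem:abso}, Taylor-expand $I(-s)$ about $-s=\xi$ keeping terms through order $t^4$ (plus the cross-term $(I^{(3)})^2 t^6$) with an $O_k(u|t|^5)$ remainder, and evaluate the resulting Gaussian moments after the change of variables $v=t/\sqrt{\xi'}$. The paper's bookkeeping is exactly what you describe, with the same power-counting ($I^{(j)}(\xi)\asymp u$, $\xi'\asymp 1/u$) showing that the quartic and sextic corrections contribute relative errors of size $O(1/u)$.
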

\begin{proof}
The contribution of $A \ge |\Im s| \ge C_k\sqrt{\log (u+1)/u}$ is acceptable by Lemma~\ref{lem:abso}.
	We may now assume that $A=C_k\sqrt{\log (u+1)/u}$. Recall $\hat{\rho}(s)=\exp(\gamma+I(-s))$. We may Taylor-expand $I(\xi+it)$ using Corollary~\ref{cor:isize}, obtaining that
	\begin{align*}
		\frac{1}{2\pi i} &\int_{-\xi-iA}^{-\xi+iA} (s+\xi)^k e^{us} \hat{\rho}(s){\mathrm d}s \\
		&= i^{k} \frac{e^{\gamma-u\xi+I(\xi)}}{2\pi}\int_{-A}^{A} t^k \exp\left( -\frac{t^2}{2} I''(\xi) +i\frac{t^3}{6} I^{(3)}(\xi)+\frac{t^4}{24}I^{(4)}(\xi)+O_k\left(u|t|^5\right) \right){\mathrm d}t\\
		&= i^{k}\frac{e^{\gamma-u\xi+I(\xi)}}{2\pi} \int_{-A}^{A} t^k \exp\big(-\frac{t^2}{2} I''(\xi)\big) \big(1+i \frac{t^3}{6} I^{(3)}(\xi) + \frac{t^4}{24}I^{(4)}(\xi) - \frac{t^6}{72}I^{(3)}(\xi)^2\big)\\
		&\qquad \qquad \cdot \left(1+O_k\left( u|t|^5 + u^3 |t|^9\right)\right) {\mathrm d}t.
	\end{align*}
Substituting $t^2 I''(\xi)=v^2$, recalling $I''(\xi(u))=1/\xi'(u)$ and using  \eqref{eq:rho and i}, the last expression can be written as
\begin{multline*}= \rho(u)(1+O_k(u^{-1})) i^k \xi'^{k/2} \frac{1}{\sqrt{2\pi}}\int_{-A/\sqrt{\xi'}}^{A/\sqrt{\xi'}} v^k \exp(-v^2/2)\\ \cdot \bigg(1+i \frac{v^3 \xi'^{3/2}}{6} I^{(3)}(\xi) + \frac{v^4 \xi'^2 }{24}I^{(4)}(\xi) - \frac{v^6 \xi'^3}{72}I^{(3)}(\xi)^2 \bigg) \bigg( 1+ O_k\bigg( \frac{|v|^5 +|v|^9}{u^{3/2}} \bigg)\bigg) {\mathrm d}v.
\end{multline*}
To conclude we apply estimates of gaussian integrals: $\int_{-R}^{R} v^{2k+1}\exp(-v^2/2){\mathrm d}v$ is $0$, $\int_{\RR}|v|^k \exp(-v^2/2){\mathrm d}v \ll_k 1$ and $\int_{-R}^{R}v^{2k}\exp(-v^2/2){\mathrm d}v =\sqrt{2\pi}\mu_{2k} + O_k(\exp(-R^2/4))$.
\end{proof}
\subsection{Proof of the second part of Proposition~\ref{prop:between}}\label{sec:propbetween2}
Here we prove \eqref{eq:2ndpartprop} using the material in \S\ref{sec:prelim2ndpart}.
If $1+u\xi > m \pi$ the result is already included in the first part of Proposition~\ref{prop:between}, so we assume from now on that $1+u\xi \le m \pi$.
From Lemma~\ref{lem:pnmstart} we have
\[ p_{n,m} = \frac{\exp\big(H_m -\gamma\big)}{m} e^{T(\xi)} 	 \frac{1}{2\pi i} \int_{-\xi-im\pi}^{-\xi+im\pi} e^{us} \hat{\rho}(s) e^{T(-s)-T(\xi)}{\mathrm d}s.\]
We have $\exp(H_m-\gamma)/m  =1+1/(2m)+O(m^{-2})$
by \eqref{eq:harmonic}, and
\[ e^{T(\xi)} = e^{\frac{u\xi}{2m}} \left( 1- \frac{\xi}{2m}+ O\left(\frac{u\log^2(u+1)}{m^2}\right)\right)\]
by \eqref{eq:lemT1}. Hence,
\[ p_{n,m} = e^{\frac{u\xi}{2m}} \left(1 + \frac{1-\xi}{2m} + O\left(\frac{u\log^2(u+1)}{m^2}\right)\right)  \frac{1}{2\pi i} \int_{-\xi-im\pi}^{-\xi+im\pi} e^{us} \hat{\rho}(s) e^{T(-s)-T(\xi)}{\mathrm d}s.\]
We separate the integral into 3 parts, $S_1+S_2+S_3$, where
\begin{align*}
	S_1 &=  \frac{1}{2\pi i} \int_{-\xi-im\pi}^{-\xi+im\pi} e^{us} \hat{\rho}(s) {\mathrm d}s,\\
	S_2 &=  \frac{1}{2\pi i} \int_{-\xi-i(1+u\xi)}^{-\xi+i(1+u\xi)} \left(e^{T(-s)-T(\xi)}-1\right)e^{us} \hat{\rho}(s) {\mathrm d}s,\\
	S_3 &=  \frac{1}{2\pi i} \left(\int_{-\xi-im\pi}^{-\xi-i(1+u\xi)}+\int_{-\xi+i(1+u\xi)}^{-\xi+im\pi}\right) \left(e^{T(-s)-T(\xi)} - 1 \right)e^{us} \hat{\rho}(s) {\mathrm d}s.
\end{align*}
The integral $S_1$ was estimated in Lemma~\ref{lem:rhoonly} and it gives the main term $\rho(u)$ as well an absolute error of size $\ll e^{-(u-1)\xi}/n \ll e^{-u\xi} \log(u+1)/m$. The integral $S_3$ was estimated in Lemma~\ref{lem:rhohigh} and it contributes $\ll e^{-u\xi}\log(u+1)/m$. 
We now study $S_2$. We use Lemma~\ref{lem:Tderivs} to Taylor-expand $e^{T(-s)-T(\xi)}-1=e^{T(\xi-it)-T(\xi)}-1$ at $0$:
\begin{multline}
e^{T(-s)-T(\xi)}-1 = -T'(\xi)(s+\xi) + \frac{(s+\xi)^2}{2}\left( T''(\xi)+T'(\xi)^2\right)\\
-(T^{(3)}(\xi)+3T'(\xi)T''(\xi)+(T'(\xi))^3) \frac{(s+\xi)^3}{6}+O\left( \frac{u\log(u+1)}{m} |s+\xi|^4\right)
\end{multline}
for $|\Im s| \le 1+u\xi$.
Applying Lemma~\ref{lem:abso} with $k=4$, $B=0$ and Lemma~\ref{lem:rholin} with $k=1,2,3$ and collecting the terms gives
\begin{multline}
	S_2 = \rho(u)\bigg(\frac{T'(\xi)(1+O(\log (u+1)^{-1}))-T''(\xi)(1+O(\log(u+1)^{-1}))}{2u} \\
 + O\big(   \frac{\log (u+1)}{n}+\frac{u\log^2(u+1)}{m^2}\big)\bigg).
 \end{multline}
All in all, since $T'(\xi)$ and $T''(\xi)$ are both $\frac{u\log(u+1)}{2m} (1+o_{u\to\infty}(1))$ in our range by Lemma \ref{lem:Tderivs}, 
\[ p_{n,m} =\exp\big(\frac{u\xi}{2m}\big) \rho(u)\bigg( 1-\frac{\log(u+1)}{2m}(1+o_{u\to\infty}(1)) + O\bigg(\frac{u\log^2 (u+1)}{m^2}\bigg)\bigg). \]
We are done, as we established, in stronger form, the second part of Proposition~\ref{prop:between}.
\subsection{Proof of Lemma~\ref{lem:shif}}\label{sec:lemshif}
We first show \eqref{eq:rhoxirelation}. Let $r(t)=-\rho'(t)/\rho(t)\ge 0$. We have
\[	\rho(u)= \rho\bigg( u-\frac{u}{2m}\bigg) \exp\bigg( -\int_{u-\frac{u}{2m}}^{u} r(t){\mathrm d}t\bigg).\]
If $u$ is bounded then the bound $r(t)\ll 1$ for $t \ll 1$ finishes the proof. If $u \ge C$ we may differentiate $r$ (it is differentiable for $t \ge 2$) and obtain 
\[ \rho(u) = \rho\bigg( u-\frac{u}{2m}\bigg) \exp\bigg( -\frac{ur(u)}{2m} + \int_{u-\frac{u}{2m}}^{u} \bigg(u-\frac{u}{2m}-t\bigg)r'(t) {\mathrm d}t\bigg)\]
by integration by parts. To conclude, we use \eqref{eq:LaBrTe} to find
\[	-\frac{ur(u)}{2m} + \int_{u-\frac{u}{2m}}^{u} \bigg(u-\frac{u}{2m}-t\bigg)r'(t) {\mathrm d}t = -\frac{u\xi(u)}{2m} + O\bigg( \frac{u}{m^2} + \frac{1}{m}\bigg).\]
It remains to show \eqref{eq:m12}. Set $t_1 = n/(m+1/2)$ and $t_2=u-u/(2m)$. It suffices to bound $\log (\rho(t_2)/\rho(t_1))$. Observe $t_1\ge t_2$. We have
\[ \log \rho(t_2)-\log \rho(t_1) = \int_{t_2}^{t_1} r(t)\mathrm{d}t \le (t_1-t_2) \max_{t\in [t_2,t_1]} r(t) \ll \frac{n}{m^3} \log(u+1) = \frac{u\log(u+1)}{m^2} \]
since $r(t)\ll \log(t+1)$ by \eqref{eq:LaBrTe} and Lemma~\ref{lem:xi size}. This finishes the proof.
\section{Proof of Theorem \ref{thm:mediumm} and Corollary \ref{cor:lower}}
Let \[ d_{m,m}=-\frac{1}{m}\sum_{j=2}^{m}\frac{1}{j}, \qquad d_{m,i}=\frac{\Gamma\left(i+\frac{i}{m}\right)}{(m-i)i!\Gamma\left(1+\frac{i}{m}\right)}\quad  (1 \le i \le m-1).\]
\begin{lem}\label{lem:transition}
	The estimate 
\[	p_{n,m} = \frac{\exp\big( - u \log n + u + \sum_{i=1}^{m} d_{m,i} n^{1-\frac{i}{m}} + E\big)}{\sqrt{2\pi n m}}  \bigg(1+O\bigg(\frac{1}{\max\{\log u, n^{1/m}\}}\bigg)\bigg)\]
	holds uniformly for $n>m \ge 1$, where $E$ is a quantity satisfying
	\[ E \ll \frac{n^{-\frac{1}{m}}}{1-n^{-\frac{1}{m}}} + \frac{\frac{m}{n}}{1-\left( \frac{m}{n}\right)^{\frac{1}{m}}}.\]
\end{lem}
Lemma \ref{lem:transition} is a minor improvement on Theorem 1 of \cite{Manstavicius2016}, which treats $m\le \log n$.
\begin{proof}
Recall the definitions of $x$ and $\lambda$ given in \eqref{eq:saddledef2} and \eqref{eq:lambedadef}. Following the proof of Theorem 1 in \cite{Manstavicius2016}\footnote{We correct a few typos. The definition of $D(x)$ in p.~21 of \cite{Manstavicius2016} should be replaced by the one given in p.~4 of that paper. The left-hand side of their equation (48) should be ``$\log D(x)$''. In the first displayed equation in p.~22, the sum of $h_N z^N$ should start at $N=-r$ and not $N=-r+1$. In the second displayed equation in p.~23, the factor $n!$ that appears twice should be omitted.} we have, borrowing the notation of \cite[Sec.~5]{Manstavicius2016},
\[ p_{n,m} = \frac{1}{\sqrt{2\pi \lambda}}\exp\big(-u\log n + u +\sum_{i=1}^{m} d_{m,i} n^{1-\frac{i}{m}} +E\big)\big( 1+ O\big(u^{-1}\big)\big)\]
for $E= R\big(n^{-\frac{1}{m}}\big)$ where
\[ R(z): = \sum_{i=1}^{\infty} h_i z^i -n \sum_{i=m+1}^{\infty} b_i z^i\]
for certain coefficients $h_i$ and $b_i$ defined in Lemma 13 of \cite{Manstavicius2016} and estimated in Lemma 15 of \cite{Manstavicius2016}. In particular, 
\[ h_i = \frac{i+m}{i}b_{i+m}\]
for $i \ge 1$ and 
\[ b_i \ll \frac{m^{\frac{i}{m}}}{i}\]
for $i \ge 1$.
In the first displayed equation of Lemma 15 of \cite{Manstavicius2016} it is shown that $b_i \ll i^{\frac{i}{m}-1}/m$ for $m+1 \le i \le 2m-1$. It implies
\[	b_i \ll \frac{i-m}{m} \]
in the same range. Hence
\begin{align*} R(z)&\ll \sum_{i=1}^{m-1} \frac{i+m}{m}|z|^i + \sum_{i=m}^{\infty} \frac{m}{i}( m^{\frac{1}{m}}|z|)^i + n\bigg(\sum_{i=m+1}^{2m-1} \frac{i-m}{m}|z|^i + \sum_{i=2m}^{\infty}\frac{1}{i}(m^{\frac{1}{m}}|z|)^i\bigg)\\
&\ll \sum_{i=1}^{m-1} |z|^{i} + \sum_{i=m}^{2m-1} \bigg( \frac{n(i-m)}{m} + m^{\frac{i}{m}}\bigg) |z|^i + \sum_{i=2m}^{\infty} \frac{n}{i} (m^{\frac{1}{m}}|z|)^i.
\end{align*}
It follows that
\[ R(n^{-\frac{1}{m}}) \ll \frac{n^{-\frac{1}{m}}}{1-n^{-\frac{1}{m}}} + \frac{n^{-\frac{1}{m}}}{m(1-n^{-\frac{1}{m}})^2}+ \frac{\frac{m}{n}}{1-(\frac{m}{n})^{\frac{1}{m}}}\ll \frac{n^{-\frac{1}{m}}}{1-n^{-\frac{1}{m}}} + \frac{\frac{m}{n}}{1-(\frac{m}{n})^{\frac{1}{m}}}.\]
To conclude, we input the estimates for $\lambda/(nm)$ given in Lemma~\ref{lem:xlam}.
\end{proof}

\subsection{Proof of Theorem~\ref{thm:mediumm}}
Let
\[ S:=\sum_{i=1}^{m-1} d_{m,i} n^{1-\frac{i}{m}}.\]
In view of Lemma~\ref{lem:transition} we have
\begin{equation}\label{eq:pnmS}
	p_{n,m} = \left(\frac{e}{n}\right)^u \exp\left( S + O(\log n)\right)
\end{equation}
when $m=O(\log n)$ and it remains to estimate $S$. We have
\[\frac{1}{m-i} = \frac{1}{m} \left(1+O\left( \frac{i}{m-i}\right)\right), \qquad \frac{1}{\Gamma\left(1+\frac{i}{m}\right)} = 1+O\left( \frac{i}{m}\right)\]
uniformly for $1 \le i \le m-1$. The estimate $\Gamma(x+y)/\Gamma(x) \in [x(x+y)^{y-1},x^y]$ for $y \in (0,1)$ \cite[Eq.~(7)]{Wendel1948} implies
\[\frac{\Gamma\left(i+\frac{i}{m}\right)}{i!} =\frac{1}{i} \frac{\Gamma\left( i +\frac{i}{m}\right)}{\Gamma(i)}= \frac{i^{\frac{i}{m}}}{i} \left(1+O\left( \frac{1}{m}\right)\right).\]
Hence,
\[d_{m,i}=\frac{i^{\frac{i}{m}}}{mi}\left(1+O\left( \frac{i}{m-i}\right)\right).\]
Since $i^{i/m}=1+O(\tfrac{i\log i}{m})$ if $i<m/\log (m+1)$, we may write $S$ as $S= u(S_1+O(S_2+S_3))$ where
\[S_1 =\sum_{i=1}^{m-1} \frac{n^{-\frac{i}{m}}}{i},\,\quad S_2=\sum_{1 \le i < \frac{m}{\log (m+1)}} \frac{n^{-\frac{i}{m}}}{i} \frac{i\log(i+1)}{m},\, \quad S_3=\sum_{\frac{m}{\log(m+1)} \le i \le m-1}  (i/n)^{\frac{i}{m}}\frac{m}{i}.\]
We have
\[ S_1 =  -\log(1-n^{-\frac{1}{m}}) + O\left( \frac{1}{nm}\frac{1}{1-n^{-\frac{1}{m}}}\right)\]
by bounding the tail of the Taylor series of $-\log(1-n^{-1/m})$ by a geometric series with ratio $n^{-1/m}$. Similarly,
\[ S_2 \ll \frac{1}{m} \frac{1}{n^{\frac{1}{m}}-1}\ll \frac{n^{-\frac{1}{m}}}{m}\]
because the contribution of $i \in [2^k,2^{k+1})$ to $S_2$ is $\ll (k+1)n^{-2^k/m}/(m(1-n^{-1/m}))$ and $\sum_{k \ge 0}(k+1)n^{-2^k/m}\ll n^{-1/m}$ when $m\ll \log n$. As for $S_3$,
\[S_3\ll m \sum_{\frac{m}{\log(m+1)}\le i \le m-1}\frac{1}{i} \max_{\frac{m}{\log(m+1)}\le i \le m-1} (i/n)^{i/m} \ll  m\log \log (m+2) n^{-\frac{1}{\log(m+1)}} \]
since $i\mapsto (i/n)^{i/m}$ decreases for $i \le n/e$. Both $S_3$ and the error term in $S_1$ are dominated by our bound for $S_2$. It follows that 
\[ S =  u \big( -\log\big(1-n^{-\frac{1}{m}}\big) + O\big( \frac{n^{-\frac{1}{m}}}{m}\big)\big).\]
The error $O(\log n)$ in \eqref{eq:pnmS} is absorbed in the error term in $S$ when $m>1$, and we are done.
\subsection{Proof of Corollary~\ref{cor:lower}}
We first consider $n \ge m > n/2$. For $m=n$, $p_{n,n}=\rho(1)=1$, so we may assume $m\le n-1$. We have exact formulas: $p_{n,m}=1-\sum_{i=m+1}^{n}1/i$ and $\rho(u)=1-\log u$. Hence
\[p_{n,m}-\rho(u)=  \sum_{i=m+1}^{n}( \log i - \log(i-1) - \frac{1}{i}) \asymp \sum_{i=m+1}^{n} \frac{1}{i^2} \asymp \int_{m}^{n} \frac{{\mathrm d}t}{t^2}.\]
We are done since, in our range,
\[\int_{m}^{n} \frac{{\mathrm d}t}{t^2} =\frac{1}{m}-\frac{1}{n}\asymp\frac{n-m}{m^2} =\frac{u-1}{m}\asymp \frac{\log u}{m}.\]
We now suppose $n/2 \ge m \ge C \log n$. We shall need the lower bound \cite[Thm.~A.1]{Gorodetsky2022}
\begin{equation}\label{eq:lowerpnm}
p_{n,m} \ge \rho(u)\bigg(1+\frac{cu\log u}{m}\bigg)
\end{equation}
which holds uniformly for $n/2 \ge m \ge 1$. We may assume $n\ge C$, since for bounded $n$ we just want to show $B_2\rho(u) \ge p_{n,m}\ge B_1\rho(u)$ for constants $B_2\ge B_1>1$ which follows from \eqref{eq:lowerpnm} and \eqref{eq:pnmbound}. If $u$ is sufficiently large then it follows from Proposition~\ref{prop:between} that, using the same definition for $A$ as in the statement of the corollary,
\[	A \asymp  \frac{u\xi(u)}{m}\asymp  \frac{u\log u}{m} \asymp u \log \left( 1+ \frac{\log u}{m}\right) \]
as needed. If $u=O(1)$ then the same argument establishes $A \le Cu\log ( 1+ (\log u)/m)$ and a matching lower bound in this range follows from \eqref{eq:lowerpnm}.

Finally we suppose $m=O(\log n)$. From \eqref{eq:db} we have
	\begin{equation}\label{eq:aux}
	\left(\frac{e}{n}\right)^u \frac{1}{\rho(u)} = \exp\left( u \left( \log\left( \frac{\log u}{m}\right) + O\left( \frac{\log \log u}{\log u}\right) \right)\right).
	\end{equation}
From Theorem~\ref{thm:mediumm} and \eqref{eq:aux}, 
\[ u^{-1}A = \log\left( \frac{\log n}{m}\right) - \log\left(1-n^{-\frac{1}{m}}\right) +O\left(\frac{\log \log n}{\log n}\right).\]
If $n$ is sufficiently large and $1\le m\le C\log n$,
\[\log\bigg( \frac{\log n}{m}\bigg) - \log(1-n^{-\frac{1}{m}}) = \log \bigg( \frac{(\log n)/m}{1-e^{-(\log n)/m}}\bigg) \asymp  \log\bigg(1+\frac{\log n}{m}\bigg)\]
since $t/(1-e^{-t}) \ge 1+c$ when $t=(\log n)/m\ge 1/C$. This finishes the proof.
\begin{remark}
When $m=O(\log n)$ the proof above shows more: setting $t:=(\log n)/m$,
	\[ A= u \left( \log \frac{t}{1-e^{-t}} +O\left ( \frac{\log\log (n+2)}{\log (n+2)}\right)\right)\]
holds, where $A$ is as in the statement of Corollary~\ref{cor:lower}.
\end{remark}
\section{Proofs of results in the polynomial setting}
We recall we can write $G_q$ as
\begin{equation}\label{eq:Gqnotation}
	G_q(z) =\exp\left(\sum_{i=m+1}^{\infty} \frac{a_iz^i}{i}\right)
\end{equation}
where $a_{i}$ are nonnegative numbers, depending on $q$, that are described in \cite[Lem.~2.1]{Gorodetsky2022} and satisfy 
\begin{equation}\label{eq:aisize}
a_i \ll \min\{q^{-\lceil i/2\rceil}, q^{m-i}\}.
\end{equation}
\subsection{Proof of Theorem~\ref{thm:smallupnmq}}
We suppose that $n \ge m \ge \sqrt{10n\log n}$. This guarantees $2^{m/3} > 1+ m(\log 2)/3 \ge 1 + 2u\log u \ge 1+u\xi = e^{\xi}$, where we used Lemma~\ref{lem:uniformxi} in the last inequality. Hence
\begin{equation}\label{eq:eximbnd}
e^{\xi/m} < \sqrt[3]{2}.
\end{equation}
An application of Cauchy's integral formula allows us to express $p_{n,m}$ as
\begin{equation}\label{eq:pnmcauchy}
p_{n,m} =  \frac{\exp(H_m-\gamma)}{m} \frac{1}{2\pi i}\int_{-\xi-im\pi}^{-\xi+im\pi} \hat{\rho}(s)e^{us} e^{T(-s)}{\mathrm d}s,
\end{equation}
see \cite[\S4]{Manstavicius2016}. In the same way,
\begin{equation}\label{eq:pnmqcauchy}
p_{n,m,q} = \frac{\exp(H_m-\gamma)}{m} \frac{1}{2\pi i}\int_{-\xi-im\pi}^{-\xi+im\pi} \hat{\rho}(s)e^{us} e^{T(-s)} G_q(e^{-s/m}) {\mathrm d}s,
\end{equation}
see e.g.~\cite[\S4]{Gorodetsky2022}. Since $G_q$ has radius of convergence equal to $q$, we must ensure that $e^{\xi/m}< q$ in order for the last integral to be valid, and this holds by \eqref{eq:eximbnd}. By taking a linear combination of  \eqref{eq:pnmcauchy} and \eqref{eq:pnmqcauchy}, and using \eqref{eq:harmonic} we have
\[ p_{n,m,q}-G_q(e^{\xi/m})p_{n,m} = G_q(e^{\xi/m}) \frac{1+O(m^{-1})}{2\pi i}X \]
for
\[ X:= \int_{-\xi-im\pi}^{-\xi+im\pi} \hat{\rho}(s)e^{us} e^{T(-s)} \bigg(\frac{G_q(e^{-s/m})}{G_q(e^{\xi/m})}-1\bigg) {\mathrm d}s.\]
Since $p_{n,m} \gg \rho(u)$ for $m \ge \sqrt{10n\log n}$ by \eqref{eq:pnmbound}, it follows that
\[ \frac{p_{n,m,q}}{p_{n,m}}=G_q(e^{\xi/m})\bigg(1+O\bigg(\frac{|X|}{\rho(u)}\bigg)\bigg).\]
We must show $X \ll \rho(u) \log(u+1)/(mq^{\lceil(m+1)/2\rceil})$. To bound $X$, we consider separately the contribution of $|\Re s| \le 1+u\xi$ and $m\pi \ge |\Re s| \ge 1+u\xi$. We start with $|\Re s | \le 1+u\xi$.
Let 
\[ H(s):= \frac{G_q(e^{-s/m})}{G_q(e^{\xi/m})}-1, \qquad H_T(s):= H(s) e^{T(-s)}\]
so that the integrand in $X$ is $\hat{\rho}(s) e^{us} H_T(s)$. We Taylor-expand $H_T(s)$ at $s=-\xi$, where it attains $0$: $H_T(s) = (s+\xi)b_1 + O(|s+\xi|^2 b_2)$ for 
\[b_1= H_T'(-\xi) \qquad \text{and} \qquad b_2=\max_{|t| \le 1+u\xi} |H_T''(-\xi+it)|.\]
Applying Lemma~\ref{lem:abso} with $k=2$, $B=0$ and Lemma~\ref{lem:rholin} with $k=1$, it follows that
\[ \int_{-\xi-i(1+u\xi)}^{-\xi+i(1+u\xi)} \hat{\rho}(s)e^{us} H_T(s) {\mathrm d}s \ll \left(|b_1|+b_2\right)\frac{\rho(u)}{u}.\]
We now consider the contribution of $\pi m \ge |\Re s| \ge 1+u\xi$. We focus on $m\pi \ge \Re s \ge 1+u\xi$, and negative $\Re s$ is handled the same say.
We have $\hat{\rho}(s) = 1/s + O(u\log (u+1) /|s|^2)$ in this range by the third part of Lemma~\ref{lem:i bounds}. The contribution of $O(u\log (u+1)/|s|^2)$ is
\[ \ll \int_{-\xi+i(1+u\xi)}^{-\xi+i\pi m}|e^{us} \frac{u\log(u+1)}{s^2} H_T(s)| |{\mathrm d}s|\ll b_3e^{-u\xi} \] 
where \[ b_3 = \max_{|t| \le \pi m} |H_T(-\xi+it)|.\]
We study the last piece of the integral,
\[ \int_{-\xi+i(1+u\xi)}^{-\xi+i\pi m} \frac{e^{us}}{s} e^{T(-s)} H(s) {\mathrm d}s .\]
We write $e^{T(-s)}$ as $1+(e^{T(-s)}-1)$.
When $s=-\xi+it$ for $1+u\xi \le t \le \pi m$ we have $e^{T(-s)}-1=O(|s|/m + |s|^2/m^2)$ by Corollary~\ref{cor:t}. Applying the triangle inequality we get
\[ \int_{-\xi+i(1+u\xi)}^{-\xi+i\pi m} \frac{e^{us}}{s} (e^{T(-s)}-1) H(s){\mathrm d}s\ll b_4 \int_{-\xi+i(1+u\xi)}^{-\xi+i\pi m}\ \bigg| \frac{e^{us}}{s}\bigg| \bigg(\frac{|s|}{m}+\frac{|s|^2}{m^2}\bigg) |{\mathrm d}s| \ll b_4 e^{-u\xi} \]
where
\[ b_4 = \max_{|t| \le \pi m} |H(-\xi+it)|.\]
We estimate
\begin{equation}\label{eq:finalboss} \int_{-\xi+i(1+u\xi)}^{-\xi+i\pi m} \frac{e^{us}}{s} H(s) {\mathrm d}s .
\end{equation}
Using the notation in \eqref{eq:Gqnotation} and the bound in \eqref{eq:aisize}, it follows that
\begin{equation}\label{eq:H}
	\begin{split}
H(s) &= \log G_q(e^{-s/m}) - \log G_q(e^{\xi/m}) +O\left( \frac{u^2 \log^2(u+1)}{m^2 q^{2\lceil \frac{m+1}{2}\rceil}}\right)\\
&=\sum_{j=m+1}^{2m} \frac{a_j}{j}(e^{-js/m}-e^{j\xi/m}) + O\left(\frac{u^2\log^2(u+1)}{mq^{m+1}}\right).
\end{split}
\end{equation}
Hence the integral in \eqref{eq:finalboss} equals
\[ =\sum_{j=m+1}^{2m} \frac{a_j}{j} \int_{-\xi+i(1+u\xi)}^{-\xi+i\pi m} \frac{e^{us}}{s} (e^{-js/m}-e^{j\xi/m}) {\mathrm d}s+O\left( \frac{u^2 \log^2(u+1) e^{-u\xi}\log(m+1)}{mq^{m+1}} \right).
\]
If $u < 3$ then $G_q(e^{\xi/m})-1\ll 1/(mq^{\lceil (m+1)/2\rceil})$, implying Theorem~\ref{thm:smallupnmq} is already in \eqref{eq:combined}. From now on we assume $u \ge 3$. We have the estimate \[ \int_{-\xi+i(1+u\xi)}^{-\xi+i\pi m} \frac{e^{(u-j/m)s}}{s}{\mathrm d}s \ll \frac{e^{-(u-j/m)\xi}}{(1+u\xi)|u-j/m|},\]
which is valid for all $2m\ge j \ge 0$ and $u \ge 3$, and is established by integration by parts. It follows that the integral \eqref{eq:finalboss} contributes 
\[\ll  \frac{u^2 \log^2(u+1) e^{-u\xi}\log (m+1)}{mq^{m+1}}  + \sum_{j=m+1}^{2m} \frac{a_j}{j} \frac{e^{-u\xi}}{1+u\xi}  \frac{e^{j\xi/m}}{|u-j/m|} \ll \frac{e^{-u\xi}}{n q^{\lceil \frac{m+1}{2}\rceil}}.\]
Collecting the estimates,
\begin{equation}\label{eq:collect}
X \ll \frac{\rho(u)}{u}(|b_1|+b_2)+(b_3+b_4) e^{-u\xi}+ \frac{e^{-u\xi}}{n q^{\lceil \frac{m+1}{2}\rceil}}.
\end{equation}
Recall we want to show $X \ll \rho(u) \log(u+1)/(mq^{\lceil(m+1)/2\rceil})$. We have $e^{-u\xi}\ll_k \rho(u) u^{-k}$ for any $k$ by \eqref{eq:rho and i}, showing that the last term in \eqref{eq:collect} is acceptable and that it suffices to show $b_i \ll u\log(u+1)/(mq^{\lceil(m+1)/2\rceil})$ for $i=1,2,3,4$.

Since $T(-\xi+it)$ is bounded when $|t|\le \pi m$ by Corollary \ref{cor:t}, it follows that $b_3 \ll b_4$. By \eqref{eq:H} and the triangle inequality, $b_4 \ll u\log(u+1)/(mq^{\lceil(m+1)/2\rceil})$. To bound $b_1$ and $b_2$ we use the fact that $T$ and its derivatives are bounded by Lemma \ref{lem:Tderivs} in order to reduce the problem to showing \[ H^{(i)}(-\xi+it) \ll u \log(u+1)/(mq^{\lceil(m+1)/2\rceil})\]
holds for $i=0,1,2$. For $i=0$ this is in \eqref{eq:H}, for $i=1$ we have
\[ H'(s) = -\frac{e^{-s/m}}{m} \frac{G_q'(e^{-s/m})}{G_q(e^{\xi/m})} \ll \frac{|G_q'(e^{-s/m})|}{m} \ll \frac{\sum_{i=m+1}^{\infty} a_i e^{(i-1)\xi/m}}{m} \ll \frac{u\log(u+1)}{mq^{\lceil(m+1)/2\rceil}}\]
and a similar computation holds for $i=2$. This finishes the proof of Theorem~\ref{thm:smallupnmq}.
\subsection{Proof of Corollary~\ref{cor:rangelarger}}
Fix $\varepsilon>0$. We assume $n \ge m \ge (2+\varepsilon)\log_q n$ and we want to establish \eqref{eq:main mediumn32} in this range. We already know \eqref{eq:main mediumn32} holds in $n/(\log n \log^3\log(n+1)) \ge m \ge (2+\varepsilon)\log_q n$, so we may suppose $n \ge m >n/(\log n \log^3\log(n+1))$. 

If $n=O(1)$ then \eqref{eq:main mediumn32} becomes $p_{n,m,q}=p_{n,m}G_q(x) + O_{n,\varepsilon}(G_q(x)/q^{\lceil (m+1)/2\rceil})$. To establish this, recall $p_{n,m,q}=p_{n,m} + O(1/q^{\lceil (m+1)/2\rceil})$ by \cite[Prop.~1.5]{Gorodetsky2022} and observe that $G_q(x) = 1+O_{n,\varepsilon}(1/q^{\lceil (m+1)/2\rceil})$ by \eqref{eq:Gqnotation} and \eqref{eq:aisize}.

From now on we may suppose $n$ is sufficiently large. In particular, $n>\sqrt{10 n \log n}$ holds, and $n \ge m >n/(\log n \log^3\log(n+1))$ implies $n \ge m \ge \sqrt{10 n \log n}$. We apply Theorem~\ref{thm:smallupnmq}, and see that it suffices to show that 
\begin{equation}\label{eq:GqxGqxi}
	\frac{G_q(x)}{G_q(e^{\xi(u)/m})} = 1 +O\left( \frac{u\log^2 (u+1)}{m^2 q^{\lceil \frac{m+1}{2}\rceil}}\right)
\end{equation}
holds for $n\ge m \ge \sqrt{10 n \log n}$.
First we verify that $x<\sqrt[3]{2}$. Since $x \le n^{1/m}$, it suffices to show that $2^m>n^3$, which follows from $n \ge m \ge \sqrt{10n\log n}$. Now that we know $G_q$ converges absolutely at $x$ we proceed. The relation $x^m = e^{\xi}(1+O(\log(u+1)/m))$ from \eqref{eq:xidiff} implies
\[ \log G_q(x) - \log G_q(e^{\xi/m}) = \sum_{i=m+1}^{\infty} \frac{a_i (x^i - e^{\xi i /m})}{i} \ll \frac{u\log^2 (u+1)}{m^2 q^{\lceil \frac{m+1}{2}\rceil}}\]
using \eqref{eq:aisize}, and \eqref{eq:GqxGqxi} follows by exponentiating.

\subsection{Auxiliary computation}
Recall $F_q$ defined in \eqref{eq:defFq}. We define $x_q:=x_{n,m,q}<q$ as the unique positive solution to $z F_q'(z)/F_q(z) = n$.
The next lemma gives precise results on $x-x_q$ in certain ranges.
\begin{lem}\label{lem:xxq}
If $n \ge m \ge 1$ then $x_q \le x$.
Fix $q$ and $\varepsilon>0$. For $n \ge C_{q,\varepsilon}$ and $m \in [(3/4)\log_q n, (2-\varepsilon)\log_q n]$,
\[ C_{q,\varepsilon}\frac{\min\{n,q^m\}}{q^m m} \big(1-\frac{x_q}{q}\big)^{-1} \ge  x-x_q \ge c_q \frac{\min\{n,q^m\}^{2}}{q^m nm}.\]
\end{lem}
\begin{proof}
We shall use the form of $G_q$ given in \eqref{eq:Gqnotation}. Let $f(z):=z(\log F(z))' = \sum_{i=1}^{m} z^i$ and $g_q(z) := z(\log G_q(z))'= \sum_{i=m+1}^{\infty}a_i z^i$. By definition, \begin{equation}\label{eq:nfg}
	n = f(x)= f(x_q)+g_q(x_q).
\end{equation}
Since $f$ and $g_q$ have nonnegative coefficients, the first part of the lemma follows at once. We now assume $(2-\varepsilon)\log_q n \ge m \ge (3/4)\log_q n $.
We use \eqref{eq:nfg} to determine the size of $x-x_q$. By the mean value theorem,
\[ 0 \le x-x_q = f^{-1}(n) - f^{-1}(n-g_q(x_q)) = \frac{g_q(x_q)}{f'(f^{-1}(n-t))} \]
for some $t \in [0,g_q(x_q)]$, where $f^{-1}$ is the inverse of $f$. 
Since $f^{-1}$ and $f'$ are monotone increasing we have $f'(f^{-1}(n-t)) \in [f'(x_q), f'(x)]$ and so
\[	x-x_q \in \left[ \frac{g_q(x_q)}{f'(x)}, \frac{g_q(x_q)}{f'(x_q)} \right]. \]
We have the trivial bounds $1 \le x\le n^{1/m} \ll_q 1$ and so $f'(x) = \lambda/x \asymp_q nm$ by Lemma~\ref{lem:xlam}. We also have
\[ g_q(x_q) \ge a_{2m}x_q^{2m} \gg q^{-m} x_q^{2m}\]
since $a_i \ge 0$ and $a_{2m} \gg q^{-m}$ by \cite[Lem.~2.1]{Gorodetsky2022}. It follows that
\[ x-x_q \gg_q \frac{x_q^{2m}}{q^m nm}.\]
Lemma 2 and Theorem 2 of \cite{Manstavicius1992} estimate $x_q$ and yield that, when $2\log_q n \ge m \ge (3/4)\log_q n$, 
\begin{equation}\label{eq:xqmsize}
	x_q^m \asymp_q \min\{n,q^m\},
\end{equation}
implying the desired lower bound on $x-x_q$. To prove the upper bound, first observe that $m \le (2-\varepsilon)\log_q n$ implies, via \eqref{eq:xqmsize}, that $x_q \ge q^{1/2}(1+c_{q,\varepsilon})$ if $n$ is sufficiently large, and so 
\[ g_q(x_q) \ll \sum_{i=m+1}^{2m} \frac{x_q^i}{q^{i/2}}+q^m \sum_{i \ge 2m}\frac{x_q^i}{q^i} \ll_{q,\varepsilon} \frac{ x_q^{2m}}{q^m}\big(1-\frac{x_q}{q}\big)^{-1}\]
using the bounds in \eqref{eq:aisize}. To conclude we need to lower bound $f'(x_q)$. We have
$f'(x_q) = x_q^{-1} \sum_{i=1}^{m} i x_q^i \gg_{q} m x_q^m$.
\end{proof}
\subsection{Proof of Theorem~\ref{thm:trans}}
Recall the functions $F$, $F_q$ and $G_q$ are defined in \eqref{eq:Fdef}, \eqref{eq:defFq} and \eqref{eq:Gqdeforig}. If $m \to \infty$ and $u\ge (\log \log m)^3 \log m$, Manstavi\v{c}ius proved in \cite{Manstavicius1992,Manstavicius19922} that
\begin{equation}\label{eq:manst}
	p_{n,m,q} =  \frac{F_q(x_{q})}{x_q^n\sqrt{2\pi \lambda_q}}\left( 1+O_q\left( \frac{m}{n}+\frac{m}{q^m}\right)\right)
\end{equation}
where
\[ \lambda_q:=\lambda_{n,m,q}= z \left(\frac{zF'_q(z)}{F_q(z)}\right)'|_{z=x_q}.\]
We divide \eqref{eq:manst} by \eqref{eq:saddle}, and use the fact that $\lambda$ is asymptotic to $nm$ by Lemma~\ref{lem:xlam} as long as $u \to \infty$, and 
\begin{equation}\label{eq:lambdaq}
\lambda_q \sim nm \left(1+ \frac{n}{q^m}\left(1-\frac{1}{q}\right)\right)
\end{equation}
by Theorem 2 of \cite{Manstavicius1992} as long as $u \to \infty$ and $m \to \infty$, to obtain
\begin{equation}\label{eq:ppGratio} \frac{p_{n,m,q}}{p_{n,m}G_q(x)} \sim \left(1+ \frac{n}{q^m}\left(1-\frac{1}{q}\right)\right)^{-1/2} \frac{x^nF(x_{q})G_q(x_{q})}{x_q^nF(x)G_q(x)}
\end{equation}
as $n \to \infty$ if $(2-\varepsilon)\log_q n \ge m > \log_q n$. Letting 
\[ H_q(z) :=\log F_q(z)-n\log z = \log F(z)  + \log G_q(z)-n \log z,\]
we can write \eqref{eq:ppGratio} as
\begin{equation}\label{eq:ppGrationicer} \frac{p_{n,m,q}}{p_{n,m}G_q(x)} \sim  \exp\left(H_q(x_q)-H_q(x)\right) \left(1+ \frac{n}{q^m}\left(1-\frac{1}{q}\right)\right)^{-1/2}.
\end{equation} 
By definition,  $H'_q(x_q)=0$. By Taylor-expanding $H_q$ at $x_q$,
\[ H_q(x)-H_q(x_{q}) = \frac{H''_q(t)}{2}(x-x_q)^2 \]
for some $t \in [x_q,x]$ (recall $x_q \le x$ by Lemma~\ref{lem:xxq}). In the range $(2-\varepsilon)\log_q n \ge m >\log_q n$ we have $x\asymp_q x_q \asymp_q 1$ by Lemma \ref{lem:xlam} and \eqref{eq:xqmsize}. In the notation of \eqref{eq:Gqnotation},
\[ t^2 H_q''(t) = \sum_{i=2}^{m}(i-1)t^i + \sum_{i=m+1}^{\infty}(i-1)a_i t^i+ n>0\]
is increasing for $t>0$ since $a_i \ge 0$. It follows that 
\[ x_q^2 H_q''(x_q) \ll_q H_q''(t)\ll_q x^2 H_q''(x).\]
A short computation shows that
$x_q^2 H_q''(x_q)=\lambda_q$ holds by the definition of $x_q$ and $\lambda_q$. By \eqref{eq:lambdaq} it then follows that $x_q^2 H_q''(x_q) \asymp_q nm$ when $(2-\varepsilon)\log_q n \ge m >\log_q n$. In the same range we find that $x^2 H_q''(x) =  \lambda  +\sum_{i=m+1}^{\infty}(i-1)a_i x^i \ll_{q,\varepsilon} nm$ using Lemma \ref{lem:xlam} and the bounds in \eqref{eq:aisize}. Hence $H_q''(t) \asymp_{q,\varepsilon} nm$ and
\[ H_q(x)-H_q(x_q) \asymp_{q,\varepsilon} nm (x-x_q)^2 \asymp_{q,\varepsilon} \frac{n^3}{mq^{2m}}\]
where we used Lemma \ref{lem:xxq} in the second estimate. Plugging this estimate in \eqref{eq:ppGrationicer}, and observing that $(1+ \frac{n}{q^m}(1-\frac{1}{q}))^{-1/2} \sim 1$ holds in the range $m\ge (1+\varepsilon)\log_q n$ considered in Theorem~\ref{thm:trans}, concludes the proof.
\subsection{A variant}
We prove a variant of Theorem~\ref{thm:trans} with the main term $G_q(x)$ replaced by  $G_q(x_q)$. 
\begin{thm}\label{thm:trans2}
	Fix $q$ and $\varepsilon>0$. For $m \in [(3/4)\log_q n , (2-\varepsilon)\log_q n]$ we have 
	\begin{equation}\label{eq:Gqlowerupper}
		\begin{split}
			\frac{p_{n,m,q}}{p_{n,m}} &\le (1+o(1))G_q(x_q)\exp\left( \frac{C_{q,\varepsilon} \min\{n,q^m\}^2 n}{mq^{2m} } \left(1-\frac{x_q}{q}\right)^{-2}\right),\\
			\frac{p_{n,m,q}}{p_{n,m}} &\ge (1+o(1)) G_q(x_q)\exp\left( \frac{c_{q,\varepsilon} \min\{n,q^m\}^5}{n^2 mq^{2m}}\right)
		\end{split}
	\end{equation}
	as $n \to \infty$. If furthermore $n \ge C_{q,\varepsilon}$ then
	\[C_{q,\varepsilon}\frac{\min\{n,q^m\}^2}{mq^m}\big(1-\frac{x_q}{q}\big)^{-1} \ge \log G_q(x_q) \ge c_q  \frac{\min\{n,q^m\}^2}{mq^m}.\]
\end{thm}
\begin{proof}
In very much the same way \eqref{eq:ppGratio} is proved, we also have
\begin{equation}\label{eq:ppGratio2} \frac{p_{n,m,q}}{p_{n,m}G_q(x_q)} \sim \frac{x^nF(x_{q})G_q(x_{q})}{x_q^nF(x)G_q(x_q)}\left(1+ \frac{n}{q^m}\left(1-\frac{1}{q}\right)\right)^{-1/2}
\end{equation}
if $u \ge (\log \log m)^3 \log m$ and $m\to\infty$. Letting 
\[ H(z) := \log F(z)  -n \log z,\]
we can write \eqref{eq:ppGratio2} as
\[ \frac{p_{n,m,q}}{p_{n,m}G_q(x_q)} \sim \exp\left(H(x_q)-H(x)\right)\left(1+ \frac{n}{q^m}\left(1-\frac{1}{q}\right)\right)^{-1/2}.\]
By definition, $H'(x) = 0$. By Taylor-expanding $H$ at $x$,
\[  H(x_q)-H(x) = \frac{H''(t)}{2}(x-x_q)^2\]
for some $t \in [x_q,x]$. Since $t^2 H''(t) =  \sum_{i=2}^{m} (i-1)t^i +n$ is increasing in $t$, and $x \asymp_q x_q \asymp_q 1$ by Lemma \ref{lem:xlam} and \eqref{eq:xqmsize}, it follows that
\[  x_q^2 H''(x_q) \ll_q H''(t) \ll_q x^2 H''(x).\]
We have
$x^2 H''(x)=\lambda \asymp nm$ by Lemma \ref{lem:xlam} when $(2-\varepsilon)\log_q n \ge m \ge (3/4)\log_q n$. Lemma~\ref{lem:xxq} bounds $x-x_q$. This gives the first part of \eqref{eq:Gqlowerupper} (we bound the factor $(1+ \frac{n}{q^m}(1-\frac{1}{q}))^{-1/2}$ by $1$). We have $x_q^2 H''(x_q) \ge (m-1)x_q^m \gg_q m \min\{n,q^m\}$ by \eqref{eq:xqmsize}, which leads to the second part of \eqref{eq:Gqlowerupper} (we absorb $(1+ \frac{n}{q^m}(1-\frac{1}{q}))^{-1/2}$ in the exponential factor in right-hand side of \eqref{eq:Gqlowerupper}).

It remains to estimate $G_q(x_q)$. For a lower bound we use $\log G_q(x_q) \ge a_{2m} x_q^{2m}/(2m) \gg x_q^{2m}/(mq^m)$. Here we used $a_i \ge 0$ and $a_{2m} \asymp q^{-m}$ \cite[Lem.~2.1]{Gorodetsky2022}. This is simplified using \eqref{eq:xqmsize}. For the upper bound we use $a_{i} \ll \min\{q^{-\lceil i/2\rceil},q^{m-i}\}$ and $x_q \ge q^{1/2}(1+c_{q,\varepsilon})$ to obtain $\log G_q(x_q) \ll_{q,\varepsilon} x_q^{2m}/( mq^m(1-x_q/q))$. We again simplify $x_q^m$ using \eqref{eq:xqmsize}.
\end{proof}

\subsection*{Acknowledgements}
We thank Andrew Granville for valuable comments on the presentation. We are grateful to the anonymous referees for helpful comments on exposition as well as a simplification of the proof of Lemma~\ref{lem:xlam}. This project has received funding from the European Research Council (ERC) under the European Union's Horizon 2020 research and innovation programme (grant agreement No 851318).

\bibliographystyle{abbrv}
\bibliography{references}

\Addresses
\end{document}